\def\XXint#1#2#3{{\setbox0=\hbox{$#1{#2#3}{\int}$}
     \vcenter{\hbox{$#2#3$}}\kern-.5\wd0}}
\begin{document}
\title[$Q_{\gamma}$ curvature problem on $\mathbb{S}^n$]
{A perturbation result for the $Q_{\gamma}$ curvature problem on $\mathbb{S}^n$}
\author{Guoyuan Chen}
\address{School of Mathematics and Statistics, Zhejiang University of Finance \& Economics, Hangzhou 310018, Zhejiang, P. R. China}
\email{gychen@zufe.edu.cn}
\author{Youquan Zheng}
\address{School of Science, Tianjin University, Tianjin 300072, P. R. China.}
\email{zhengyq@tju.edu.cn}
\newcommand{\optional}[1]{\relax}
\setcounter{secnumdepth}{3}
\setcounter{section}{0} \setcounter{equation}{0}
\numberwithin{equation}{section}
\newcommand{\MLversion}{1.1}
\keywords{Fractional Paneitz operator, $Q_\gamma$ curvature, Conformally covariant elliptic operators, Perturbation methods}
\date{\today}
\begin{abstract}
We consider the problem of prescribing the $Q_{\gamma}$ curvature on $\mathbb{S}^n$.
Using a perturbation method, we obtain existence results for curvatures close to a positive constant.
\end{abstract}
\maketitle

\section{Introduction}
Let $(M, g_0)$ be a $C^\infty$ compact Riemannian manifold of dimension $n \geq 3$. The famous Yamabe problem concerns the existence of a metric conformal to $g_0$ with constant scalar curvature. This corresponds to solve the following partial differential equation
\begin{equation}\label{e:Yamabe}
-\Delta_{g_0}v + \frac{n - 2}{4(n-1)}R_{g_0}v = \frac{n - 2}{4(n - 1)}R_g v^{\frac{n + 2}{n - 2}},\quad v > 0,
\end{equation}
where $R_g \equiv constant$ is the scalar curvature of $g$.

The linear operator which appears as the first two terms on the left of (\ref{e:Yamabe}) is known as the conformal Laplacian associated to the metric $g_0$ and denoted as $P_1^{g_0}$. It is conformally covariant in the sense that if $f$ is any smooth function and $g = v^{\frac{4}{n - 2}}g_0$ for some $v > 0$, then
\begin{equation}\label{e:conformallycovariant}
P_1^{g_0}(vf) = v^{\frac{n + 2}{n - 2}}P_1^{g}(f).
\end{equation}
Setting $f \equiv 1$ in (\ref{e:conformallycovariant}) yields the familiar relationship (\ref{e:Yamabe}) between the scalar curvatures $R_{g_0}$ and $R_g$.
Another conformally covariant operator is
\begin{equation*}\label{e:Paneitz}
P_2^g = (-\Delta_g)^2 - {\rm div}_g (a_n R_gg + b_nRic_g)d + \frac{n -4}{2}Q_n^g,
\end{equation*}
which was discovered by Paneitz in the 1980s, see \cite{Paneitz2008} and \cite{DjadliHebeyLedoux2000}. Here $Q_n^g$ is the standard $Q$-curvature, $Ric_g$ is the Ricci curvature of $g$ and $a_n$, $b_n$ are constants depending on $n$.

$P_1$ and $P_2$ (in the following, the superscript $g$ is omitted when there is no ambiguity) are the first two of a sequence of conformally covariant elliptic operators, $P_k$, which exist for all $k\in \mathbb{N}$ if $n$ is odd, but only for $k\in\{1,\cdot\cdot\cdot, n/2\}$ if $n$ is even.
The first construction of these operators was given by Graham-Jenne-Masion-Sparling in \cite{GrahamJenneSparling1992}. In that paper, the authors used the ambient metric construction of Fefferman and Graham systematically to construct conformally invariant powers of the Laplacian. In odd dimensions this construction is unobstructed but for dimension $n = 2m$ gives an invariant form of $(-\Delta)^k$ only for $k\leq m$. In \cite{GrahamCRobinJLMS1992}, Graham showed that this result cannot be improved in four dimension.

This leads naturally to the question whether there exist any conformally covariant pseudodifferential operators of noninteger orders. In \cite{Peterson2000}, the author constructed an intrinsically defined conformally covariant pseudo-differential operator of arbitrary real-number order acting on scalar functions. In the work of Graham and Zworski \cite{GramZworski2003}, they proved $P_k$ can be realized as residues at the values $\gamma = k$ of a meromorphic family of scattering operators. Using this opinion, one obtains a holomorphic family of elliptic pseudodifferential operators $P_{\gamma}^{g}$ for noninteger $\gamma$. We will recall this definition in Section 2. An alternative construction of these operators has been obtained by Juhl in \cite{Juhl2009} and \cite{Juhl2010}.

In recent years, there are extensive works on the properties of fractional Laplacian operators as non-local operators together with applications to free-boundary value problems and non-local minimal surfaces, for example, \cite{Caffarelli&Silvestre07}, \cite{CaffarelliRoquejoffreSavin}, \cite{CaffarelliSalsa&Silvestre}, \cite{CaffarelliValdinoci2011}, \cite{CaffarelliVasseurDFDQGE2010} and so on. Mathematically, $(-\Delta)^\gamma$ is defined as
$$
(-\Delta)^\gamma u = C(n, \gamma)\mbox{P.V.} \int_{\mathbb{R}^n}\frac{u(x) - u(y)}{|x - y|^{n + 2\gamma}}dy = C(n, \gamma)\lim_{\varepsilon\to 0^+}\int_{B_{\varepsilon}^c(x)}\frac{u(x) - u(y)}{|x - y|^{n + 2\gamma}}dy.
$$
Here P.V. is a commonly used abbreviation for `in the principal value sense' and $C(n, \gamma) = \pi^{-(2\gamma + n/2)}\frac{\Gamma(n/2 + \gamma)}{\Gamma(-\gamma)}$.
It is well known that $(-\Delta)^\gamma$ on $\mathbb{R}^{n}$ with $\gamma\in (0, 1)$ is a nonlocal operator. In the remarkable work of Caffarelli and Silvestre \cite{Caffarelli&Silvestre07}, the authors express this nonlocal operator as a generalized Dirichlet-Neumann map for a certain elliptic boundary value problem with local differential operators defined on the upper half-space $\mathbb{R}^{n+1}_{+} = \{(x, t): x\in\mathbb{R}^{n}, t > 0\}$. That is, given a solution $u = u(x)$ of $(-\Delta)^\gamma u = f$ in $\mathbb{R}^{n}$, one can equivalently consider the dimensionally extended problem for $u = u(x, t)$, which solves
\begin{equation*}
\begin{cases} \text{div}(t^{1 -2\gamma}\nabla u) = 0 , &  \text{ in }\mathbb{R}^{n+1}_{+},\\
-d_{\gamma}t^{1 -2\gamma}\partial_{t}u|_{t\to 0} = f, & \text{ on $\partial\mathbb{R}^{n+1}_{+}$}.
\end{cases}
\end{equation*}
Here the positive constant $d_{\gamma} > 0$ is explicitly given by
$$
d_{\gamma} = 2^{2\gamma -1}\frac{\Gamma(\gamma)}{\Gamma(1 - \gamma)}.
$$
In the work of Chang and Gonzalez \cite{ChangGonzalez2011}, they extended the work of \cite{Caffarelli&Silvestre07} and characterized $P_{\gamma}$ as such a Dirichlet-to-Neumann operator on a conformally compact Einstein manifold.

We focus only on the operators $P_{\gamma}$ when $\gamma\in \mathbb{R}$ and $|\gamma|\leq \frac{n}{2}$. These operators have the following conformally covariant properties: 
if $g = v^{\frac{4}{n - 2\gamma}}g_0$, then
\begin{equation}\label{e:conformalltransform}
P_{\gamma}^{g_0}(vf) = v^{\frac{n + 2\gamma}{n - 2\gamma}}P_{\gamma}^g(f)
\end{equation}
for any smooth function $f$. Generalizing the formula for scalar curvature and the Paneitz-Branson $Q$-curvature, the $Q$-curvature for $g$ of order $\gamma$ is defined as
\begin{equation*}
Q_{\gamma}^{g} = P_{\gamma}^{g}(1).
\end{equation*}

Thus one can consider the `fractional Yamabe problem': given a metric $g_0$ on a compact manifold $M$, find a function $v > 0$ on $M$ such that if $g = v^{\frac{4}{n - 2\gamma}}g_0$, then $Q_{\gamma}^{g}$ is constant. By (\ref{e:conformalltransform}), one has  to solve the equation
\begin{equation*}
P_{\gamma}^{g_0} v = Q_{\gamma}^{g} v^{\frac{n + 2\gamma}{n - 2\gamma}}, \,\,\, v > 0 {\rm \,\,\,on\,\,\,} M
\end{equation*}
with $Q_{\gamma}^{g} \equiv constant$. In this direction, we refer the interested readers to the papers \cite{GonzalezAPDE2013}, \cite{GonzalezJGA2012}, \cite{QingRaske2006}, \cite{JinXiongfYfsapplications2013}, \cite{JinLiyanyanXiongNirenbergproblemBlowupanalysis} and the references therein.

Also, one has the `prescribing $Q_\gamma$ curvature problem':
given a metric $g_0$ on a compact manifold $M$ and a smooth function $Q$ on $M$, find $v > 0$ so that if $g = v^{\frac{4}{n - 2\gamma}}g_0$, then $Q_{\gamma}^{g} = Q$. By (\ref{e:conformalltransform}), this amounts to solve
\begin{equation*}
P_{\gamma}^{g_0} v = Qv^{\frac{n + 2\gamma}{n - 2\gamma}}, \,\,\, v > 0 {\rm \,\,\,on\,\,\,} M.
\end{equation*}

In this paper, we consider the prescribing $Q_\gamma$ curvature problem on $\mathbb{S}^n$,
\begin{equation}\label{e:Qcurvaturesphere}
P_{\gamma}^{g_c} v = Qv^{\frac{n + 2\gamma}{n - 2\gamma}}, \,\,\, v > 0{\rm\,\,\, on\,\,\,}\mathbb{S}^n.
\end{equation}
Here $g_c$ denotes the canonical metric on $\mathbb{S}^n$. Let $\mathcal{N}$ be the north pole of $\mathbb{S}^n$ and
\begin{equation*}
F:\mathbb{R}^n\to \mathbb{S}^n\setminus\{\mathcal{N}\},\quad x\mapsto \left(\frac{2x}{1 + |x|^2}, \frac{|x|^2 - 1}{|x|^2 + 1}\right)
\end{equation*}
be the inverse of stereographic from $\mathbb{S}^n\setminus\{\mathcal{N}\}$ to $\mathbb{R}^n$. Then
\begin{equation*}\label{e:Qcurvature}
P_{\gamma}^{g_c}(\phi)\circ F = |J_F|^{-\frac{n + 2\gamma}{2n}}(-\Delta)^{\gamma}(|J_F|^{\frac{n - 2\gamma}{2n}}(\phi\circ F)){\rm\,\,\, for\,\, all\,\,\,}\phi\in C^{\infty}(\mathbb{S}^n),
\end{equation*}
where $|J_F| = \left(\frac{2}{1 + |x|^2}\right)^n$ and $(-\Delta)^{\gamma}$ is the fractional Laplacian operator. Hence
$u(x) = |J_F|^{\frac{n - 2\gamma}{2n}}v(F(x))$ satisfies the following $Q_{\gamma}$ curvature problem,
\begin{equation}\label{e:main}
(-\Delta)^\gamma u = (Q\circ F) u^{\frac{n + 2\gamma}{n - 2\gamma}},\,\,\, u > 0 {\rm\,\,\,on\,\,\,}\mathbb{R}^n.
\end{equation}

We assume that $Q\circ F$ is of form
\begin{equation*}
Q\circ F = 1 + \varepsilon K(x),
\end{equation*}
where $\varepsilon$ is a small real number and $K(x)$ satisfies the following conditions:
\begin{enumerate}
\item[(K1)] $K\in L^{\infty}(\mathbb{R}^n)\cap C^2(\mathbb{R}^n)$ and $\exists\,\, \eta > 0$ such that
$${\rm when}\,\,|x|\geq \eta, \quad\langle K'(x), x\rangle < 0,$$
and
$$\langle K'(x), x\rangle\in L^1(\mathbb{R}^n), \quad\displaystyle\int_{\mathbb{R}^n}\langle K'(x), x\rangle dx < 0;$$
\item[(K2)] $K$ has finitely many critical points (denote the set of critical points as Crit($K$));
\item[(K3)] $\forall\xi\in {\rm Crit}(K)$,
there exist $\beta = \beta_\xi\in (1, n)$ and a function $Q_y: \mathbb{R}^n\to \mathbb{R}$, depending continuously on $y$ locally near $\xi$, such that $A_\xi: = \frac{1}{p + 1}\displaystyle\int_{\mathbb{R}^n}Q_\xi(y)z_0^{p + 1}(y)dy \neq 0$ and
\begin{eqnarray*}
&&Q_y(\lambda x) = \lambda^\beta Q_y(x), \quad \forall \lambda \geq 0,\\
&&K(x) = K(y) + Q_\xi(x - y) + o(|x - y|^\beta),{\rm\,\,\, as\,\,\,} x\to y;
\end{eqnarray*}
\item[(K4)]$\displaystyle\sum_{A_\xi < 0}{\rm deg_{loc}}(K', \xi)\neq (-1)^n$.
\end{enumerate}

Our main theorem is
\begin{theorem}\label{t:main}
Suppose $\gamma\in (0, n/2)$ and $K$ satisfies $(K1)$-$(K4)$. Then for $\varepsilon$ small, problem (\ref{e:main}) has a solution $u_\varepsilon$ of class $C^\infty$ and $\lim_{|x|\to 0}u(x) = 0$. Equivalently, the $Q_\gamma$ curvature problem (\ref{e:Qcurvaturesphere}) is solvable when $Q$ is of form $1 + \varepsilon K\circ F^{-1}$ and $\varepsilon$ is small.
\end{theorem}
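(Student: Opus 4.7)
The plan is to implement a Lyapunov--Schmidt (finite-dimensional) reduction around the manifold of \emph{bubble} solutions of the unperturbed equation. For $\varepsilon=0$ equation (\ref{e:main}) reads $(-\Delta)^{\gamma}u=u^{(n+2\gamma)/(n-2\gamma)}$, whose positive finite-energy solutions are exhausted by the $(n+1)$-parameter family
\[
z_{\mu,\xi}(x)=c_{n,\gamma}\left(\frac{\mu}{\mu^{2}+|x-\xi|^{2}}\right)^{(n-2\gamma)/2},\qquad \mu>0,\ \xi\in\mathbb{R}^{n},
\]
by the Chen--Li--Ou classification. These are the critical points of the energy
\[
I_{0}(u)=\tfrac12\|u\|_{\dot H^{\gamma}}^{2}-\tfrac{n-2\gamma}{2n}\int_{\mathbb{R}^{n}}|u|^{2^{*}_{\gamma}},\qquad 2^{*}_{\gamma}=\tfrac{2n}{n-2\gamma},
\]
on $\dot H^{\gamma}(\mathbb{R}^{n})$, and the linearization at each $z_{\mu,\xi}$ is nondegenerate, its kernel being the tangent space $T_{(\mu,\xi)}Z=\mathrm{span}\{\partial_{\mu}z,\partial_{\xi_{1}}z,\dots,\partial_{\xi_{n}}z\}$, by the known nondegeneracy results for the fractional critical equation.

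\textbf{Reduction step.} For each $(\mu,\xi)$ I would look for solutions of the perturbed equation of the form $u=z_{\mu,\xi}+w$ with $w\perp T_{(\mu,\xi)}Z$ in $\dot H^{\gamma}$. Projecting the perturbed equation onto $T_{(\mu,\xi)}Z^{\perp}$ and using the uniform invertibility of the linearized operator on that subspace, a standard contraction argument produces a unique correction $w=w_{\varepsilon}(\mu,\xi)$ with $\|w_{\varepsilon}\|=O(\varepsilon)$, smooth in $(\mu,\xi)$. It remains to solve the $(n+1)$ bifurcation equations, namely the vanishing of the projection of the equation onto $T_{(\mu,\xi)}Z$; equivalently, I would search for critical points of the reduced functional $\Phi_{\varepsilon}(\mu,\xi)=I_{\varepsilon}(z_{\mu,\xi}+w_{\varepsilon}(\mu,\xi))$, where $I_{\varepsilon}$ has the nonlinearity weighted by $1+\varepsilon K$.

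\textbf{Expansion and location of critical points.} A Taylor expansion yields
\[
\Phi_{\varepsilon}(\mu,\xi)=I_{0}(z_{\mu,\xi})-\tfrac{\varepsilon(n-2\gamma)}{2n}\int_{\mathbb{R}^{n}}K(x)\,z_{\mu,\xi}^{2^{*}_{\gamma}}(x)\,dx+o(\varepsilon),
\]
uniformly on compact subsets of the parameter space. Since $I_{0}$ is constant along $Z$, the leading $\varepsilon$-term $G(\mu,\xi):=\int K\, z_{\mu,\xi}^{2^{*}_{\gamma}}$ drives the reduction. The homogeneity hypothesis (K3) gives the local expansion $\partial_{\mu}G\sim A_{\xi}\,\mu^{\beta-1}$ near each critical point of $K$, while (K1) yields, via a Pohozaev-type identity together with the sign of $\int\langle K',x\rangle$, that $\nabla \Phi_{\varepsilon}$ does not vanish for $\mu\to 0,\infty$ or $|\xi|\to\infty$; the admissible set of parameters can therefore be confined to a large but compact region. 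Near each $\xi\in\mathrm{Crit}(K)$ the parameter $\mu$ can be collapsed to $0$ when $A_{\xi}<0$, producing a well-defined local degree contribution equal to $\mathrm{deg}_{\mathrm{loc}}(K',\xi)$; assumption (K4) ensures that the sum of these contributions does not equal $(-1)^{n}$, which is precisely the degree of the flow at the boundary of a large ball. By the homotopy invariance of the Brouwer degree the reduced map $\nabla\Phi_{\varepsilon}$ must vanish somewhere in the interior, providing the desired critical point and hence a solution $u_{\varepsilon}$. Smoothness and the decay $u_{\varepsilon}(x)\to 0$ follow from standard regularity theory for $(-\Delta)^{\gamma}$ applied to $z_{\mu,\xi}+w_{\varepsilon}$.

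\textbf{Main obstacle.} The hardest part is the non-compactness of the critical manifold $Z$, which is both the boundary ($\mu\to 0,\infty$, $|\xi|\to\infty$) and the source of the degeneracy. Handling the interplay between the \emph{global} obstruction (K1), used to exclude escape to infinity through an integrated Pohozaev identity adapted to the nonlocal operator, and the \emph{local} expansion (K3)--(K4), used to compute the degree, is delicate because $(-\Delta)^{\gamma}$ is nonlocal for noninteger $\gamma$; in particular the remainder estimates on $w_{\varepsilon}$ and the refined asymptotics of $\partial_{\mu}G,\partial_{\xi}G$ must be obtained using the integral representation of $(-\Delta)^{\gamma}$ (or, when convenient, the Chang--Gonz\'alez/Caffarelli--Silvestre extension), rather than integration by parts as in the local case.
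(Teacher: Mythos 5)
Your proposal follows essentially the same path as the paper: identify the critical manifold $Z$ of bubbles via the Chen--Li--Ou classification, invoke the D\'avila--del Pino--Sire nondegeneracy, perform a finite-dimensional reduction, and locate critical points of the reduced (Melnikov) function $\Gamma(\mu,\xi)=c\int K(\mu y+\xi)z_0^{p+1}$ by a topological degree argument in which (K1) confines $\mathrm{Crit}(\Gamma)$ to a compact set, (K3) governs the local behavior in $\mu$ near $\{\mu=0\}$, and (K4) rules out total degree $(-1)^n$. The only presentational difference is that you carry out the Lyapunov--Schmidt reduction explicitly, whereas the paper invokes the abstract perturbation theorem of Ambrosetti--Badiale (Theorem \ref{t:abstracttheorem}), which packages exactly that reduction once conditions (i)--(iii) are verified; this makes the remainder estimates for the correction $w_\varepsilon$ automatic rather than something to be re-derived via the nonlocal integral representation, so the ``main obstacle'' you flag is in fact absorbed by the abstract framework together with the nondegeneracy theorem.
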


As a corollary, we have
\begin{cor}\label{c:corollary}
Suppose $\gamma\in (0, n/2)$, $K$ satisfies $(K1)$, $(K2)$ and one of the following conditions:
\begin{enumerate}
\item[(K5)]$\Delta K(\xi) \neq 0$ for each $\xi\in {\rm Crit}(K)$ and
\begin{equation*}
\displaystyle\sum_{\Delta K(\xi) < 0}{\rm deg_{loc}}(K', \xi) \neq (-1)^n.
\end{equation*}
\item[(K6)] $\forall \xi\in {\rm Crit}(K)$, $\exists \beta\in (1, n)$ and $a_j\in C(\mathbb{R}^n)$, with $\tilde{A}_\xi: = \sum a_j(\xi)\neq 0$ and such that $K(x) = K(\eta) + \sum a_j|x - \eta|^\beta + o(|x - \eta|^\beta)$ as $x\to \eta$, for all $\eta$ locally near $\xi$. Moreover there results
    \begin{equation*}
    \displaystyle\sum_{\tilde{A}_\xi < 0}{\rm deg_{loc}}(K', \xi) \neq (-1)^n.
    \end{equation*}
\end{enumerate}
Then for $\varepsilon$ small, problem (\ref{e:main}) has a solution $u_\varepsilon$ of class $C^\infty$ and $\lim_{|x|\to 0}u(x) = 0$. Equivalently, the $Q_\gamma$ curvature problem (\ref{e:Qcurvaturesphere}) is solvable when $Q$ is of form $1 + \varepsilon K\circ F^{-1}$ and $\varepsilon$ is small.
\end{cor}

When $\gamma = 1$, Theorem \ref{t:main} and Corollary \ref{c:corollary} was proved in \cite{AmbrosettiGarciaPeral}. Condition $(K5)$ was essentially the case handled in \cite{BahriCoronJFA1991} and $(K6)$ was discussed in \cite{LiyanyanPSCRPJDE1995} and \cite{LiYanyanPSCRPCPAM1996}.
To prove the main theorem, we will use a perturbed method. For this method and its applications, we refer the reader to
\cite{AmbrosettiGarciaPeral},\cite{MalchiodiUguzzoni2002}, \cite{AmbrosettiMalchiodi2006} and the references therein.

This paper is organized as follows: in Section 2, we will briefly describe the work of Graham and Zworski \cite{GramZworski2003} and the notation of the fraction Paneitz operator $P_{\gamma}$. A abstract perturbation theorem will be included in Section 3. In Section 4-6, we will verify the conditions of the abstract theorem to obtain a positive smooth solution.

\subsection*{Acknowledgements}
The first author was partially supported by Zhejiang Provincial Natural Science Foundation of
China (LQ13A010003). The second author was partially supported by NSFC of China (11271200).
Part of this work was done when the first author visited the Chern Institute of Mathematics,
Nankai University; he is very grateful to the institution for the kind hospitality. 

\section{Fractional Paneitz operators and the $Q_{\gamma}$ curvature.}
In this section, we briefly recall the definition of the fractional Paneitz operators and the $Q_{\gamma}$ curvature. The main references are \cite{ChangGonzalez2011}, \cite{GramZworski2003} and \cite{FeffermanGraham2012}.

Let $M$ be a compact manifold of dimension $n$ with a metric $\hat{g}$. Suppose that $X^{n + 1}$ is a smooth manifold with boundary $\partial X^{n + 1} = M$. A defining function $\rho$ of the boundary $M$ in $X$ is a smooth function on $X$ such that
\begin{enumerate}
\item[(1)] $\rho > 0$ {\rm\,\,\,in\,\,\,} $X$;
\item[(2)] $\rho = 0$ {\rm\,\,\,on\,\,\,} $M$;
\item[(3)] $d\rho\neq 0$ {\rm\,\,\,on\,\,\,} $M$.
\end{enumerate}
A complete Riemannian metric $g$ on $X$ is said to be conformally compact
of regularity $C^{k, \alpha}$ if $\rho^2g$ extends to be a $C^{k, \alpha}$ compact Riemannian metric on
$\overline{X}$ for a defining function $\rho$ of the boundary $M$ in $X$.

Given a conformally compact, asymptotically hyperbolic manifold $(X^{n + 1}, g^+)$ and a representative $\hat{g}$ in $[\hat{g}]$ on the conformal infinity $M$, there is a uniquely defining function $\rho$ such that, on $M\times (0, \delta)$ in $X$, $g^+$ has the normal form $g^+ = \rho^{-2}(d\rho^2 + g_\rho)$, where $g_\rho$ is a one-parameter family of metrics on $M$ satisfying $g_\rho|_M = \hat{g}$.
From \cite{GramZworski2003} or \cite{MazzeoMelrose1987}, we know that given $f\in C^{\infty}(M)$ and $s\in \mathbb{C}$, the eigenvalue problem
\begin{equation*}
-\Delta_{g^+} u - s(n - s)u = 0,\quad\rm{in\,\,}X
\end{equation*}
has a solution of the form
\begin{equation*}
u = F\rho^{n - s} + H\rho^s,\quad F, H\in C^{\infty}(X),\quad F|_{\rho = 0} = f,
\end{equation*}
for all $s\in \mathbb{C}$ unless $s(n-s)$ belongs to the pure point spectrum of $-\Delta_{g^+}$. Then, the scattering operator on $M$ is defined as $S(s)f = H|_M$. It is a meromorphic family of pseudo-differential operators in $Re(s) > n/2$. The values $s = n / 2, n/2 + 1, n/2 + 2$, $\cdots$ are simple poles of finite rank, these are known as the trivial poles; $S(s)$ may have other poles.

Then one can define the fractional Paneitz operators as follows: for $s = n/2 + \gamma$, $\gamma\in (0, n/2)$, $\gamma\not\in \mathbb{N}$, set
\begin{equation*}
P_{\gamma}[g^+, \hat{g}]:=d_{\gamma}S(n/2 + \gamma), \quad d_{\gamma} = 2^{2\gamma}\frac{\Gamma(\gamma)}{\Gamma(-\gamma)}.
\end{equation*}
The principal symbol of $P_{\gamma}$ is equal to the principal symbol of the fractional Laplacian $(-\Delta_{\hat{g}})^{\gamma}$. Thus $P_{\gamma} = (-\Delta_{\hat{g}})^{\gamma} + \Psi_{\gamma - 1}$, where $\Psi_{\gamma - 1}$ is a pseudo-differential operator of order $\gamma - 1$.

For a conformal change of metric
\begin{equation*}\label{conformaltransform}
\hat{g}_{v} = v^{\frac{4}{n - 2\gamma}}\hat{g}, \quad v > 0,
\end{equation*}
$P_{\gamma}[g^+, \hat{g}]$ satisfies
\begin{equation*}
P_{\gamma}[g^+, \hat{g}_{v}]\phi = v^{-\frac{n + 2\gamma}{n - 2\gamma}}P_{\gamma}[g^+, \hat{g}](v\phi),
\end{equation*}
for all smooth functions $\phi$, see \cite{GramZworski2003}.
The $Q_{\gamma}$ curvature is defined to be
\begin{equation*}
Q_{\gamma}[g^+, \hat{g}]: = P_{\gamma}[g^+, \hat{g}](1).
\end{equation*}
Therefore, we have
\begin{equation*}\label{Qgammacurvature}
P_{\gamma}[g^+, \hat{g}]v = v^{\frac{n + 2\gamma}{n - 2\gamma}}Q_{\gamma}[g^+, \hat{g}_v].
\end{equation*}

When $\gamma = k\in \mathbb{N}$, $P_k$ are the conformally invariant powers of the Laplacian constructed in the classical paper by Graham, Jenne, Mason and Sparling \cite{GramZworski2003}, or Fefferman and Graham \cite{FeffermanGraham2012}. They are local operators and satisfy
\begin{equation*}
P_k = (-\Delta)^k + \rm{\,\,lower\,\, order\,\, terms}.
\end{equation*}
For examples, when $k = 1$, it is the conformal Laplacian,
\begin{equation*}
P_1 = -\Delta_g + \frac{n - 2}{4(n - 1)}R_g,
\end{equation*}
and when $k = 2$, it is the Paneitz operator (\cite{Paneitz2008} and \cite{DjadliHebeyLedoux2000})
\begin{equation*}
P_2 = (-\Delta_g)^2 - {\rm div}_g (a_n R_gg + b_nRic_g)d + \frac{n -4}{2}Q_n^g.
\end{equation*} 

\section{The abstract perturbation method.}
In this section, we recall some abstract perturbation results, variational in nature, developed in \cite{AmbrosettiBadiale1998} and \cite{AmbrosettiCotiZelatiEkeland1987}, which we will use to obtain our existence results.

Let $E$ be a Hilbert space and let $f_0$, $G\in C^2(E, \mathbb{R})$ be given. Consider the perturbed functional
\begin{equation*}
f_\varepsilon (u) = f_0(u) - \varepsilon G(u).
\end{equation*}

Suppose that $f_0$ satisfies:
\begin{enumerate}
\item[(i)] $f_0$ has a finite dimensional manifold $Z$ of critical points; we assume that $Z$ is parameterized by a function $\alpha: A\to Z$, $A$ being an open subset of $\mathbb{R}^d$, $d\geq 1$;
\item[(ii)] for all $z\in Z$, $D^2f_0(z)$ is a Fredholm operator with index zero;
\item[(iii)] for all $z\in Z$, there holds $T_zZ = {\rm ker} D^2f_0(z)$.
\end{enumerate}

We define $\Gamma: A\to \mathbb{R}$ as $\Gamma = G\circ \alpha$. The following theorem was proved in \cite{AmbrosettiBadiale1998}, see Theorem 2.1 and Remark 2.2 in \cite{AmbrosettiGarciaPeral}.
\begin{theorem}\label{t:abstracttheorem}
Let $f_0$ satisfies (i)-(iii) above and suppose that there exists an open subset $\Omega\subseteq A$ such that $\Gamma'\neq 0$ on $\partial\Omega$ and
\begin{equation}\label{e:degreformula3}
{\rm deg}(\Gamma', \Omega, 0)\neq 0
\end{equation}
Then for $|\varepsilon|$ small enough there exists a critical point $u_\varepsilon$ of $f_\varepsilon$.
\end{theorem}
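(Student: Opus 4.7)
My approach is the classical finite-dimensional Lyapunov--Schmidt reduction. Fix $\bar\xi\in A$, set $\bar z=\alpha(\bar\xi)$, and using (iii) split $E=T_{\bar z}Z\oplus W$ with $W=(T_{\bar z}Z)^\perp$ and orthogonal projection $P:E\to W$. I look for critical points of $f_\varepsilon$ in the form $u=\alpha(\xi)+w$ with $w\in W$. The equation $f_\varepsilon'(u)=0$ then splits into an \emph{auxiliary equation} $Pf_\varepsilon'(\alpha(\xi)+w)=0$, to be solved for $w$ as a function of $(\xi,\varepsilon)$, and a \emph{bifurcation equation} $(I-P)f_\varepsilon'(\alpha(\xi)+w)=0$ in the tangent directions to $Z$.

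For the auxiliary equation, note that it holds trivially when $\varepsilon=0$, $w=0$, because $f_0'(\alpha(\xi))=0$ for every $\xi\in A$. Its linearization with respect to $w$ at $(\bar\xi,0,0)$ is $P\,D^2f_0(\bar z)|_W:W\to W$. By (ii)--(iii), the kernel of $D^2f_0(\bar z)$ is exactly $T_{\bar z}Z$, which is transverse to $W$; combined with the Fredholm index-zero property and self-adjointness of the Hessian, this makes $P\,D^2f_0(\bar z)|_W$ an isomorphism of $W$. The implicit function theorem then provides, locally near $\bar\xi$ and for $|\varepsilon|$ small, a $C^1$ map $w(\xi,\varepsilon)$ with $w(\xi,0)\equiv 0$ solving the auxiliary equation. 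A standard covering and patching argument on the compact set $\overline{\Omega}\subset A$ (shrinking $\Omega$ slightly if needed so $\overline{\Omega}\Subset A$) glues these local solutions into a single $C^1$ map $w:\overline{\Omega}\times(-\varepsilon_0,\varepsilon_0)\to E$.

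Next I would introduce the reduced functional
$$
\Phi_\varepsilon(\xi):=f_\varepsilon\bigl(\alpha(\xi)+w(\xi,\varepsilon)\bigr),\qquad \xi\in\overline{\Omega}.
$$
A routine computation, exploiting the auxiliary equation and the fact that the tangent space to $\{\alpha(\xi)+w(\xi,\varepsilon):\xi\in A\}$ at $\xi$ is a graph over $T_{\alpha(\xi)}Z$ for small $\varepsilon$, shows that the critical points of $\Phi_\varepsilon$ on $A$ correspond bijectively to the critical points of $f_\varepsilon$ on $E$ of the assumed form. Since $f_0$ is constant along $Z$ and $w(\xi,0)=0$, Taylor expansion in $\varepsilon$ gives, uniformly on $\overline{\Omega}$ in the $C^1$ topology,
$$
\Phi_\varepsilon(\xi)=c-\varepsilon\,\Gamma(\xi)+o(\varepsilon),\qquad \Phi_\varepsilon'(\xi)=-\varepsilon\,\Gamma'(\xi)+o(\varepsilon).
$$

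Finally I conclude by Brouwer degree on $\mathbb{R}^d$. Define $\Psi_\varepsilon:=-\varepsilon^{-1}\Phi_\varepsilon'=\Gamma'+o(1)$ uniformly on $\overline{\Omega}$. The hypothesis $\Gamma'\neq 0$ on $\partial\Omega$ together with this uniform estimate ensures that, for $|\varepsilon|$ small, the linear homotopy $t\Gamma'+(1-t)\Psi_\varepsilon$ does not vanish on $\partial\Omega$ for any $t\in[0,1]$. Homotopy invariance of the degree and assumption \eqref{e:degreformula3} give $\deg(\Psi_\varepsilon,\Omega,0)=\deg(\Gamma',\Omega,0)\neq 0$, producing a zero $\xi_\varepsilon\in\Omega$ of $\Psi_\varepsilon$, i.e.\ a critical point of $\Phi_\varepsilon$. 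The corresponding $u_\varepsilon:=\alpha(\xi_\varepsilon)+w(\xi_\varepsilon,\varepsilon)$ is then the desired critical point of $f_\varepsilon$. The main technical obstacle is the uniformity in the reduction step: producing a single $C^1$ map $w(\cdot,\varepsilon)$ on all of $\overline{\Omega}$ together with the uniform $C^1$-in-$\xi$ control of the $o(\varepsilon)$ remainder. This rests on compactness of $\overline{\Omega}$ in $A$ and the uniform invertibility of $P\,D^2f_0(\alpha(\xi))|_W$ for $\xi\in\overline{\Omega}$, points that need a little care because $\dim Z\geq 1$ and the splitting of $E$ depends on the base point $\alpha(\xi)$.
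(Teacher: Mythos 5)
The paper does not actually prove Theorem~\ref{t:abstracttheorem}; it simply cites \cite{AmbrosettiBadiale1998} and Theorem~2.1/Remark~2.2 of \cite{AmbrosettiGarciaPeral}, and those sources establish it by exactly the Lyapunov--Schmidt finite-dimensional reduction you describe. Your proposal is correct and follows the same route as the cited references: solve the auxiliary equation on $W=(T_zZ)^\perp$ via the implicit function theorem (invertibility of $D^2f_0(z)|_W$ coming from (ii)--(iii) and self-adjointness), obtain $w(\xi,\varepsilon)=O(\varepsilon)$ uniformly on $\overline{\Omega}$, derive the natural-constraint expansion $\Phi_\varepsilon'(\xi)=-\varepsilon\Gamma'(\xi)+o(\varepsilon)$ (the cross term $\langle D^2f_0(\alpha(\xi))w,\alpha'(\xi)\rangle$ vanishes because $\alpha'(\xi)\in\ker D^2f_0(\alpha(\xi))$), and close with homotopy invariance of Brouwer degree and hypothesis \eqref{e:degreformula3}.
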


The inclusion $T_zZ \subseteq {\rm ker} D^2f_0(z)$ is always true and condition (iii) is a non-degeneracy condition which allows to apply the Implicit Function Theorem. The solution $u_\varepsilon$ is close to $Z$ in the sense that there exists $z_\varepsilon \in Z$, such that $\|u_\varepsilon - z_\varepsilon\|\leq C \varepsilon$, for some constant $C$ depending on $\Omega$, $f_0$ and $G$.

\section{The variational space}
In this section, we first recall some useful facts of the fractional order Sobolev spaces, then we choose the variational space that is suitable for our argument. For more details, please see, for example, \cite{AdamsSobolevSpace}, \cite{Shubin01}, \cite{NezzaPalatucci&Valdinoci}.

Consider the Schwartz space $\mathcal{S}$ of rapidly decaying $C^{\infty}$ functions on $\mathbb{R}^{n}$. The topology of this space is generated by the seminorms
$$
p_{N}(\varphi) = \displaystyle\sup_{x\in\mathbb{R}^{n}}(1 + |x|)^{N}\sum_{|\alpha|\leq N}|D^{\alpha}\varphi(x)|,\quad N = 0, 1, 2,\cdot\cdot\cdot,
$$
where $\varphi\in \mathcal{S}$. Let $\mathcal{S}'$ be the set of all tempered distributions, which is the topological dual of $\mathcal{S}$. As usual, for any $\varphi\in \mathcal{S}$, we denote by
$$
\mathcal{F}\varphi(\xi) = \frac{1}{(2\pi)^{n/2}}\int_{\mathbb{R}^{n}}e^{-i\xi\cdot x}\varphi(x)dx
$$
the Fourier transformation of $\varphi$ and we recall that one can extend $\mathcal{F}$ from $\mathcal{S}$ to $\mathcal{S}'$.

When $\gamma\in (0, 1)$, the space $H^{\gamma}(\mathbb{R}^{n}) = W^{\gamma, 2}(\mathbb{R}^n)$ is defined by
\begin{eqnarray*}
H^{\gamma}(\mathbb{R}^{n})& = &\left\{u\in L^2(\mathbb{R}^2): \frac{|u(x) - u(y)|}{|x - y|^{n/2 + \gamma}}\in L^{2}(\mathbb{R}^n\times\mathbb{R}^n)\right\}\\
& = & \left\{u\in L^2(\mathbb{R}^2): \int_{\mathbb{R}^n}(1 + |\xi|^{2\gamma})|\mathcal{F}u(\xi)|^2d\xi < +\infty\right\}
\end{eqnarray*}
and the norm is
\begin{eqnarray*}
\|u\|_{\gamma} := \|u\|_{H^{\gamma}(\mathbb{R}^{n})}& = &\left(\int_{\mathbb{R}^n}|u|^2dx + \int_{\mathbb{R}^n}\int_{\mathbb{R}^n}\frac{|u(x) - u(y)|^2}{|x - y|^{n + 2\gamma}}dxdy\right)^{1/2}.
\end{eqnarray*}
Here the term
$$
[u]_{\gamma} := [u]_{H^{\gamma}(\mathbb{R}^{n})} = \left(\int_{\mathbb{R}^n}\int_{\mathbb{R}^n}\frac{|u(x) - u(y)|^2}{|x - y|^{n + 2\gamma}}dxdy\right)^{1/2}
$$
is the so-called Gagliardo (semi) norm of $u$. The following identity yields the relation between the fractional operator $(-\Delta)^\gamma$ and the fractional Sobolev space $H^{\gamma}(\mathbb{R}^{n})$,
$$
[u]_{H^{\gamma}(\mathbb{R}^{n})} = C\left(\int_{\mathbb{R}^n}|\xi|^{2\gamma}|\mathcal{F}u(\xi)|^2d\xi\right)^{1/2} = C\|(-\Delta)^{\gamma/2}u\|_{L^2(\mathbb{R}^n)}
$$
for a suitable positive constant $C$ depending only on $\gamma$ and $n$.

When $\gamma > 1$ and it is not an integer we write $\gamma = m + \sigma$, where $m$ is an integer and $\sigma\in (0, 1)$. In this case the space $H^{\gamma}(\mathbb{R}^{n})$
consists of those equivalence classes of functions $u\in H^{m}(\mathbb{R}^{n})$ whose distributional derivatives $D^{J} u$, with $|J| = m$, belong to $H^{\sigma}(\mathbb{R}^{n})$, namely
\begin{eqnarray*}
H^{\gamma}(\mathbb{R}^{n}) = \left\{u\in H^{m}(\mathbb{R}^{n}): D^{J} u\in H^{\sigma}(\mathbb{R}^{n}) \mbox{\,\,for all\,\,}J \mbox{\,\,with\,\,} |J| = m\right\}
\end{eqnarray*}
and this is a Banach space with respect to the norm
\begin{eqnarray*}
\|u\|_{\gamma} := \|u\|_{H^{\gamma}(\mathbb{R}^{n})} = \left(\|u\|^2_{H^{m}(\mathbb{R}^{n})} + \displaystyle\sum_{|J| = m}\|D^{J} u\|^2_{H^{\sigma}(\mathbb{R}^{n})}\right)^{1/2}.
\end{eqnarray*}
Clearly, if $\gamma = m$ is an integer, the space $H^{\gamma}(\mathbb{R}^{n})$ coincides with the usual Sobolev space $H^{m}(\mathbb{R}^{n})$.

Theorem \ref{t:abstracttheorem} will be applied here to the following setting:
\begin{equation*}
E = D^{\gamma}(\mathbb{R}^n) : = \{u\in L^{\frac{2n}{n - 2\gamma}}(\mathbb{R}^n): \|(-\Delta)^{\gamma/2}u\|_{L^2(\mathbb{R}^n)} < \infty\}
\end{equation*}
\begin{equation*}
{\rm with\,\,norm\,\,}\|u\|_{D^{\gamma}(\mathbb{R}^n)} =  \|(-\Delta)^{\gamma/2}u\|_{L^2(\mathbb{R}^n)},
\end{equation*}
\begin{equation*}
f_0(u) = \frac{1}{2}\displaystyle\int_{\mathbb{R}^n}|(-\Delta)^{\frac{\gamma}{2}} u(x)|^2dx - \frac{n - 2\gamma}{2n}\displaystyle\int_{\mathbb{R}^n}{u_+}^{\frac{2n}{n - 2\gamma}}dx,
\end{equation*}
\begin{equation*}
G(u) = \frac{n - 2\gamma}{2n}\displaystyle\int_{\mathbb{R}^n}K(x) {u_+}^{\frac{2n}{n - 2\gamma}}dx,
\end{equation*}
where $u_+ = \max\{u, 0\}$.
In the reminder of this paper, we will show that conditions (i)-(iii) and (\ref{e:degreformula3}) hold true.

\section{The unperturbed problem.}
In this section, we consider the unperturbed equation,
\begin{equation}\label{e:unperturbed}
(-\Delta)^\gamma u = u^{\frac{n + 2\gamma}{n - 2\gamma}}, \quad u > 0{\rm\,\,\,on\,\,\,}\mathbb{R}^n.
\end{equation}

This equation arises from the Hardy-Littlewood-Sobolev inequality, which states the existence of a
positive number $S$ such that for all $u\in C^{\infty}_0(\mathbb{R}^n)$, one has
\begin{equation}\label{e:hardylittlewoodsobolevinequality}
S\|u\|_{L^{2^*}(\mathbb{R}^n)}\leq \|(-\Delta)^{\gamma/2}u\|_{L^2(\mathbb{R}^n)},
\end{equation}
where $2^* = \frac{2n}{n - 2\gamma}$. The optimal constant in (\ref{e:hardylittlewoodsobolevinequality}) was first found by Lieb in \cite{Lieb1983}. Lieb established that the extremals corresponding precisely to functions of form
\begin{equation}\label{e:classification}
z_{\mu, \xi}(x) = \alpha \left(\frac{\mu}{\mu^2 + |x - \xi|^2}\right)^{\frac{n - 2\gamma}{2}},\quad \alpha > 0,\quad \mu > 0,\quad \xi\in\mathbb{R}^n,
\end{equation}
which for suitable $\alpha = \alpha_{n, \gamma}$ solves (\ref{e:unperturbed}). Alternative proofs for this result were established in \cite{FrankLieb2010},
\cite{FrankLieb2012} and \cite{CarlenLoss1990}. Indeed, under some decay assumptions, (\ref{e:classification}) are the only solutions for (\ref{e:unperturbed}). In \cite{ChenWenxiongLicongming2006CPAM}, Chen, Li and Ou proved that every positive solution $u\in L^{\frac{2n}{n - 2\gamma}}_{loc}(\mathbb{R}^n)$ to (\ref{e:unperturbed}) are of the form (\ref{e:classification}). Similar results were also proved in \cite{LiYanyan2004} and \cite{LiZhu1995}.

To apply the perturbation argument, some type of non-degeneracy condition is
required. Checking this condition will be much harder than in the classical case, since it is
usually very difficult to compute explicitly fractional derivatives and singular integrals.

The linearized equation of (\ref{e:unperturbed}) is
\begin{equation}\label{e:linearized}
(-\Delta)^\gamma \phi = pu^{p - 1}\phi,
\end{equation}
where $p = \frac{n + 2\gamma}{n - 2\gamma}$.
In \cite{DavilaDelpinoSire2013}, the following non-degeneracy result was proved.

\begin{theorem}\label{l:nondegeneracy}(\cite{DavilaDelpinoSire2013})
The solutions
\begin{equation*}
z_{\mu, \xi}(x) = \alpha_{n, \gamma} \left(\frac{\mu}{\mu^2 + |x - \xi|^2}\right)^{\frac{n - 2\gamma}{2}}, \quad \mu > 0,\quad \xi\in\mathbb{R}^n,
\end{equation*}
of (\ref{e:unperturbed}) is nondegenerate in the sense that all bounded solutions of equation (\ref{e:linearized}) with $u = z_{\mu, \xi}$
are linear combination of the functions
\begin{equation*}
\partial_\mu z_{\mu, \xi},\,\,\,\partial_{\xi_i}z_{\mu, \xi},\,\,\,1\leq i \leq n.
\end{equation*}
\end{theorem}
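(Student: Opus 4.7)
The plan is to reduce (\ref{e:linearized}) to an eigenvalue problem on $\mathbb{S}^n$ via the inverse stereographic projection, and then exploit the fact that $P_\gamma^{g_c}$ is diagonal in the basis of spherical harmonics with completely explicit eigenvalues. By the translation and dilation covariance of (\ref{e:unperturbed}) it is enough to treat the case $\mu=1$, $\xi=0$; set $U(x):=z_{1,0}(x)=\alpha_{n,\gamma}(1+|x|^2)^{-(n-2\gamma)/2}$, so that, up to a constant, $U=|J_F|^{(n-2\gamma)/(2n)}$.

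Given a bounded $\phi$ satisfying $(-\Delta)^\gamma\phi = pU^{p-1}\phi$, write $\phi = U\cdot(\psi\circ F)$. The conformal covariance formula (\ref{e:conformalltransform}) (applied to $P_\gamma^{g_c}$ and $P_\gamma^{g_0}=(-\Delta)^\gamma$) then turns the linearized equation into
\begin{equation*}
P_\gamma^{g_c}\psi \;=\; p\,Q_\gamma^{g_c}\,\psi \qquad \text{on } \mathbb{S}^n .
\end{equation*}
The operator $P_\gamma^{g_c}$ is diagonal in the spherical-harmonic decomposition of $L^2(\mathbb{S}^n)$ with the Branson-type eigenvalues
\begin{equation*}
P_\gamma^{g_c} Y_k \;=\; \lambda_k Y_k, \qquad \lambda_k \;=\; \frac{\Gamma(\tfrac{n}{2}+\gamma+k)}{\Gamma(\tfrac{n}{2}-\gamma+k)}, \qquad Q_\gamma^{g_c} = \lambda_0 .
\end{equation*}
A one-line computation with the functional equation of $\Gamma$ gives $\lambda_1/\lambda_0 = (n+2\gamma)/(n-2\gamma)=p$, and for $k\ge 2$
\begin{equation*}
\frac{\lambda_k}{\lambda_1} \;=\; \prod_{j=1}^{k-1}\frac{\tfrac{n}{2}+\gamma+j}{\tfrac{n}{2}-\gamma+j} \;>\; 1,
\end{equation*}
so that $k=1$ is the \emph{unique} mode matching the eigenvalue $pQ_\gamma^{g_c}$. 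Its eigenspace is $(n+1)$-dimensional, spanned by the restrictions of the coordinate functions $x_1,\dots,x_{n+1}$ to $\mathbb{S}^n$. Pulling these $n+1$ functions back through $\phi = U\cdot(\psi\circ F)$ and comparing with the explicit derivatives of $z_{\mu,\xi}$, one identifies them as linear combinations of $\partial_\mu z_{\mu,0}|_{\mu=1}$ and $\partial_{\xi_i} z_{1,\xi}|_{\xi=0}$. The general $(\mu,\xi)$ case follows by rescaling.

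The genuinely delicate step is justifying the spectral reduction when $\phi$ is only assumed to be bounded. Because $U(x)\to 0$ like $|x|^{-(n-2\gamma)}$, the function $\psi\circ F=\phi/U$ may a priori grow like $(1+|x|^2)^{(n-2\gamma)/2}$ near the north pole $\mathcal{N}$, so $\psi$ need not lie in $L^2(\mathbb{S}^n)$ and the spherical-harmonic sum is not immediately available. To remedy this, one invokes the Caffarelli--Silvestre / Chang--Gonz\'alez extension recalled in Section~2 to realize the equation $P_\gamma^{g_c}\psi = pQ_\gamma^{g_c}\psi$ as a local degenerate-elliptic problem on an extended domain, combined with a Kelvin-type transformation centered at $\mathcal{N}$ (under which the linearized equation is preserved and boundedness is interchanged with the decay $|x|^{-(n-2\gamma)}$). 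Standard Schauder/De~Giorgi estimates for the extended problem then force $\psi$ to extend continuously across $\mathcal{N}$ and to belong to every $H^s(\mathbb{S}^n)$, which legitimizes the eigenvalue count above. This regularity/removability argument is the main technical content of \cite{DavilaDelpinoSire2013}; once it is in place, the explicit spectrum of $P_\gamma^{g_c}$ closes the proof.
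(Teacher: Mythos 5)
The paper does not contain a proof of this statement: it is quoted verbatim as a citation to \cite{DavilaDelpinoSire2013}, so there is no in-paper argument to compare your sketch against. Judged on its own terms, your spectral reduction is correct: after the conformal change $\phi = U\,(\psi\circ F)$, the covariance identity (\ref{e:conformalltransform}) turns (\ref{e:linearized}) into $P_\gamma^{g_c}\psi = p\,Q_\gamma^{g_c}\psi$ on $\mathbb{S}^n$; the intertwining eigenvalues $\lambda_k = \Gamma(\tfrac{n}{2}+\gamma+k)/\Gamma(\tfrac{n}{2}-\gamma+k)$ satisfy $\lambda_1/\lambda_0 = (n+2\gamma)/(n-2\gamma) = p$ and $\lambda_k > \lambda_1$ for $k\ge 2$, so the eigenspace is exactly the $(n+1)$-dimensional space of degree-one spherical harmonics; and the pullback of $x_1,\dots,x_{n+1}\big|_{\mathbb{S}^n}$ through $\phi = U(\psi\circ F)$ does land on the span of $\partial_{\xi_i}z$ and $\partial_\mu z$.

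The one issue worth flagging is that you correctly identify, but then defer, the only genuinely hard step: upgrading the hypothesis that $\phi$ is merely \emph{bounded} to the decay $\phi = O(|x|^{-(n-2\gamma)})$ that makes $\psi\in L^2(\mathbb{S}^n)$ and legitimizes the harmonic expansion. Writing ``this regularity/removability argument is the main technical content of \cite{DavilaDelpinoSire2013}'' is circular if the goal is an independent proof. It is also worth noting that the actual argument in \cite{DavilaDelpinoSire2013} is organized differently from your outline: rather than passing to $\mathbb{S}^n$ and invoking the Branson eigenvalues of $P_\gamma^{g_c}$, they work directly in the Caffarelli--Silvestre extension over $\mathbb{R}^{n+1}_+$, decompose the extended function into spherical harmonics on $\mathbb{S}^{n-1}$ (the angular variable in $\mathbb{R}^n$), and for each angular mode analyze the reduced degenerate ODE in $(r,t)$ via maximum-principle and comparison arguments, proving decay and the vanishing of the modes $\ge 2$ in the same sweep. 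Your route is arguably cleaner once decay is in hand; theirs has the advantage of producing the decay and the mode-by-mode classification simultaneously, without needing the $\mathbb{S}^n$ intertwining formula.
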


For the fractional nonlinear Schr\"{o}dinger equation
\begin{equation}\label{e:schrodinger}
(-\Delta)^\gamma u + u = |u|^{\alpha}u\quad\mbox{on}\quad \mathbb{R}^n,
\end{equation}
where $\alpha\in(0, 4\gamma/(n - 2\gamma))$ when $ 0 < \gamma < n/2$ and $\alpha\in(0, \infty)$ when $ \gamma \geq n/2$,
Frank, Lenzmann and Silvestre proved the nondegeneracy of solutions of (\ref{e:schrodinger}), see \cite{Frank&LenzmannActaMath} and \cite{FrankLenzmann&Silvestre}.

Let $Z$ be defined as
\begin{equation*}
Z = \{z_{\mu, \xi}: \mu\in \mathbb{R}_+, \xi\in\mathbb{R}^n\}.
\end{equation*}
All the functions in $Z$ are critical points of $f_0$ and $Z$ is a $n + 1$ dimensional manifold in $E = D^{\gamma}(\mathbb{R}^n)$ parameterized by the map $\alpha: A = \mathbb{R}_+\times \mathbb{R}^n\to E$, $\alpha(\mu, \xi) = z_{\mu, \xi}$. So condition (i) holds.

It is easy to see that $D^2f_0(z)$ is a Fredholm operator with index zero for all $z\in Z$. So condition (ii) holds.

From Theorem \ref{l:nondegeneracy}, condition (iii) holds.

\section{Proof of the main results.}
In this section, we will show that under the hypotheses of Theorem \ref{t:main}, (\ref{e:degreformula3}) is satisfied for some suitable set $\Omega$ and complete the proof.

From the definition of $\Gamma$ (see Section 3), we have
\begin{eqnarray*}
\Gamma(\mu, \xi) &=& \frac{1}{p + 1}\displaystyle\int_{\mathbb{R}^n}K(x)z_{\mu, \xi}^{p + 1}(x)dx\\
&=& \frac{1}{p + 1}\displaystyle\int_{\mathbb{R}^n}K(\mu y + \xi)z_0^{p + 1}(y)dy.
\end{eqnarray*}
Here $z_0 := z_{1, 0} = \alpha_{n, \gamma}(\frac{1}{1 + |x|^2})^{(n - 2\gamma)/2}$.

\begin{lemma}
$\Gamma$ can be extended to $\mathbb{R}^{n + 1}$ as a $C^1$ function.
\end{lemma}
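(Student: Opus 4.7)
The plan is to define the extension of $\Gamma$ (denoted $\widetilde\Gamma$ for clarity) on all of $\mathbb{R}\times\mathbb{R}^n$ by the very same integral formula,
\[
\widetilde\Gamma(\mu,\xi) := \frac{1}{p+1}\int_{\mathbb{R}^n} K(\mu y+\xi)\, z_0^{p+1}(y)\, dy,
\]
and then to verify that $\widetilde\Gamma$ is of class $C^1$. Since $z_0^{p+1}(y)=\alpha_{n,\gamma}^{p+1}(1+|y|^2)^{-n}\in L^1(\mathbb{R}^n)$ and $K\in L^\infty$, the integrand is dominated uniformly in $(\mu,\xi)$ by the $L^1$ function $\|K\|_\infty z_0^{p+1}(y)$; hence $\widetilde\Gamma$ is well defined on $\mathbb{R}^{n+1}$, agrees with $\Gamma$ for $\mu>0$, reduces at $\mu=0$ to $\widetilde\Gamma(0,\xi)=\frac{K(\xi)}{p+1}\int z_0^{p+1}$, and (by the change of variables $y\mapsto-y$ together with the radiality of $z_0$) is automatically even in $\mu$. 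Continuity of $\widetilde\Gamma$ then follows directly from the Lebesgue dominated convergence theorem and the continuity of $K$.

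To compute $\nabla\widetilde\Gamma$ on $\{\mu\neq 0\}$ without invoking any growth assumption on $\nabla K$, the idea is to push the derivatives off $K$ and onto the smooth rapidly decaying factor $z_0^{p+1}$ using integration by parts in $y$. Via the chain-rule identities $\partial_{\xi_i}[K(\mu y+\xi)]=\mu^{-1}\partial_{y_i}[K(\mu y+\xi)]$ and $\langle K'(\mu y+\xi),y\rangle=\mu^{-1}\,y\cdot\nabla_y[K(\mu y+\xi)]$, IBP gives
\[
\partial_{\xi_i}\widetilde\Gamma(\mu,\xi) = -\frac{1}{(p+1)\mu}\int_{\mathbb{R}^n} K(\mu y+\xi)\,\partial_i(z_0^{p+1})(y)\,dy,
\]
\[
\partial_\mu\widetilde\Gamma(\mu,\xi) = -\frac{1}{(p+1)\mu}\int_{\mathbb{R}^n} K(\mu y+\xi)\,\nabla_y\!\cdot\!\bigl(y\,z_0^{p+1}(y)\bigr)\,dy.
\]
Because $\partial_i(z_0^{p+1})$ and $\nabla\!\cdot\!(yz_0^{p+1})$ lie in $L^1$ and $K\in L^\infty$, differentiation under the integral is standard on $\{\mu\neq 0\}$, and these formulas depend continuously on $(\mu,\xi)$ there.

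The crux is continuity of the derivatives as $\mu\to 0$. By the divergence theorem together with the decay of $z_0^{p+1}$, $\int\partial_i(z_0^{p+1})\,dy=0=\int\nabla\!\cdot\!(yz_0^{p+1})\,dy$, so one may subtract $K(\xi)$ inside each integral above to convert the singular $\mu^{-1}$ prefactor into the difference quotient $\mu^{-1}[K(\mu y+\xi)-K(\xi)]$, which tends pointwise to $\langle K'(\xi),y\rangle$ as $\mu\to 0$. A dominated convergence argument (using the mean value theorem together with the sharp decay $|\partial(z_0^{p+1})(y)|,\,|\nabla\!\cdot\!(yz_0^{p+1}(y))|\lesssim (1+|y|)^{-(2n+1)}$) then yields the continuous extensions
\[
\partial_{\xi_i}\widetilde\Gamma(0,\xi)=\frac{\partial_i K(\xi)}{p+1}\int_{\mathbb{R}^n}z_0^{p+1}(y)\,dy,\qquad \partial_\mu\widetilde\Gamma(0,\xi)=0,
\]
the second identity being consistent with (and in fact forced by) the evenness of $\widetilde\Gamma$ in $\mu$ and the radial symmetry of $z_0$. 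These limits match the values obtained by differentiating the explicit formula $\widetilde\Gamma(0,\xi)=\frac{K(\xi)}{p+1}\int z_0^{p+1}$ directly in $\xi$. The principal technical hurdle is producing a uniform-in-$\mu$ integrable dominant for the difference-quotient integrand as $\mu\to 0$; this is where the $C^2$ regularity of $K$ (giving Lipschitz control on compacta) together with the strong decay of $\partial(z_0^{p+1})$ is essential. Once this dominant is in hand, $\widetilde\Gamma\in C^1(\mathbb{R}^{n+1})$ follows.
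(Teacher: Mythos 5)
Your proposal reaches the same conclusion by a genuinely more elaborate route. The paper's own argument is short: it extends $\Gamma$ to $\mu=0$ by $\Gamma(0,\xi)=c_0K(\xi)$, differentiates directly under the integral sign to get $D_\mu\Gamma(\mu,\xi)=\frac{1}{p+1}\int_{\mathbb{R}^n}\langle K'(\mu y+\xi),y\rangle z_0^{p+1}(y)\,dy$, observes that at $\mu=0$ this vanishes because $\int y_i z_0^{p+1}\,dy=0$, and then invokes the evenness of $\Gamma$ in $\mu$ to conclude the extension is $C^1$. You instead keep all derivatives off $K$: differentiating the dilation/translation explicitly and integrating by parts in $y$ to land the derivatives on $z_0^{p+1}$, then exploiting $\int\partial_i(z_0^{p+1})=\int\nabla\!\cdot(yz_0^{p+1})=0$ to replace the singular $\mu^{-1}$ prefactor by the difference quotient $\mu^{-1}[K(\mu y+\xi)-K(\xi)]$, and finishing with dominated convergence. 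Your route is more robust — it makes no implicit use of a pointwise bound on $K'$ (which $K\in L^\infty\cap C^2$ alone does not give, and which the paper's ``direct computation'' tacitly leans on to justify differentiation under the integral) — at the cost of being considerably longer. One small remark on the dominant you worry about: it is produced cleanly by splitting on $|\mu y|\lessgtr 1$, using the mean value theorem with $\sup_{|z-\xi|\le 1}|K'(z)|$ on $\{|\mu y|<1\}$ and $2\|K\|_\infty/|\mu|\le 2\|K\|_\infty|y|$ on $\{|\mu y|\ge1\}$, giving a bound $C_\xi|y|$ that is integrable against $|\partial(z_0^{p+1})|\lesssim(1+|y|)^{-(2n+1)}$; with that filled in, your argument is complete.
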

\begin{proof}
First, we extend $\Gamma$ to the hyperplane $\{(0, \xi):\xi\in \mathbb{R}^n\}$ by
\begin{equation*}
\Gamma(0, \xi) = \frac{1}{p + 1}\displaystyle\int_{\mathbb{R}^n}K(\xi)z_0(y)^{p + 1}dy \equiv c_0 K(\xi),
\end{equation*}
where $c_0 = \frac{1}{p + 1}\displaystyle\int_{\mathbb{R}^n}z_0(y)^{p + 1}dy$. A direct computation tells
\begin{equation*}
D_{\mu}\Gamma(\mu, \xi) = \frac{1}{p + 1}\displaystyle\int_{\mathbb{R}^n}\langle K'(\mu y + \xi), y\rangle z^{p + 1}_0(y)dy.
\end{equation*}
Since
$$\int_{\mathbb{R}^n}y_iz^{p + 1}_0(y)dy = 0 {\rm\,\, for\,\, all\,\,} i = 1, \cdot\cdot\cdot, n,$$ one has
\begin{equation}\label{e:reductionfunction}
D_{\mu}\Gamma(0, \xi) = 0.
\end{equation}
So by symmetry, $\Gamma$ can be extended to $\mathbb{R}^{n + 1}$ as a $C^1$ function.
\end{proof}
We will still denote this extended function as $\Gamma$.

\begin{remark}\label{r:equivalence}
From (\ref{e:reductionfunction}), it holds that
\begin{equation*}\label{e:equivalence}
\xi\in {\rm Crit}(K) \Leftrightarrow (0, \xi)\in {\rm Crit}(\Gamma).
\end{equation*}
\end{remark}

To prove (\ref{e:degreformula3}), we need the following lemmas.
\begin{lemma}\label{e:gamma}
Suppose that $K$ satisfies condition (K1).
Then there exists $R > 0$ such that
\begin{equation*}
\langle \Gamma'(\mu, \xi), (\mu, \xi)\rangle < 0 \quad\rm{for\,\, all\,\,} |\mu| + |\xi| \geq R.
\end{equation*}
\end{lemma}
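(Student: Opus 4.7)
Differentiating under the integral in $\Gamma(\mu,\xi)=\tfrac{1}{p+1}\int K(\mu y+\xi)z_0^{p+1}(y)\,dy$ gives
\[
\langle\Gamma'(\mu,\xi),(\mu,\xi)\rangle = \frac{1}{p+1}\int_{\mathbb{R}^n}\Phi(\mu y+\xi)\,z_0^{p+1}(y)\,dy,\qquad \Phi(x):=\langle K'(x),x\rangle,
\]
where, by (K1), $\Phi\in L^1(\mathbb{R}^n)$, $\Phi<0$ on $\{|x|\geq\eta\}$, and $I:=\int_{\mathbb{R}^n}\Phi\,dx<0$. Using $(p+1)(n-2\gamma)/2=n$, so that $z_0^{p+1}(y)=\alpha_{n,\gamma}^{p+1}(1+|y|^2)^{-n}$, the substitution $x=\mu y+\xi$ (for $\mu>0$) puts this in the ``Poisson-type'' form
\[
\langle\Gamma'(\mu,\xi),(\mu,\xi)\rangle = \frac{C_n}{p+1}\int_{\mathbb{R}^n}\Phi(x)\,H(x;\mu,\xi)\,dx,\qquad H(x;\mu,\xi):=\frac{\mu^n}{(\mu^2+|x-\xi|^2)^n},
\]
a weighted integral of $\Phi$ against a positive kernel $H$ whose total mass $I_0$ is independent of $(\mu,\xi)$.

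The plan is then to argue by contradiction: suppose a sequence $(\mu_k,\xi_k)$ has $t_k:=\sqrt{\mu_k^2+|\xi_k|^2}\to\infty$ and $\langle\Gamma'(\mu_k,\xi_k),(\mu_k,\xi_k)\rangle\geq 0$. Write $(\mu_k,\xi_k)=t_k(\sigma_k,\eta_k)$ with $\sigma_k\geq 0$ (using that $\Gamma$ is even in $\mu$, since $z_0$ is radial) and extract $(\sigma_k,\eta_k)\to(\sigma_\ast,\eta_\ast)$ on the compact half-sphere. If $\sigma_\ast>0$, then $\mu_k\to\infty$ and the identity $\mu_k^n H(x;\mu_k,\xi_k)=(1+|x-\xi_k|^2/\mu_k^2)^{-n}$ gives pointwise limit $\sigma_\ast^{2n}$ (since $|x-\xi_k|^2/\mu_k^2\to|\eta_\ast/\sigma_\ast|^2=(1-\sigma_\ast^2)/\sigma_\ast^2$) together with the bound by $1$, so dominated convergence with dominator $|\Phi|$ yields $\mu_k^n\langle\Gamma',(\mu_k,\xi_k)\rangle\to\tfrac{C_n}{p+1}\sigma_\ast^{2n}\,I<0$, a contradiction. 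If $\sigma_\ast=0$, then $|\xi_k|\to\infty$ with $\mu_k=o(|\xi_k|)$: when $\mu_k=0$, identity (\ref{e:reductionfunction}) combined with $\partial_{\xi_i}\Gamma(0,\xi)=c_0\partial_iK(\xi)$ gives $\langle\Gamma'(0,\xi_k),(0,\xi_k)\rangle=c_0\Phi(\xi_k)<0$; when $\mu_k>0$, the kernel $H$ concentrates in a ball of radius $\sim\mu_k$ about $\xi_k$ which sits inside $\{|x|\geq\eta\}$ for $k$ large, and splitting $\int\Phi H=\int_{|x|<\eta}\Phi H+\int_{|x|\geq\eta}\Phi H$ one estimates the first piece by $O(\mu_k^n/|\xi_k|^{2n})$ (using $H\leq\mu_k^n/(|\xi_k|-\eta)^{2n}$ on $B_\eta$ and $\Phi\in L^\infty(\overline{B_\eta})$), while the second is strictly negative of strictly larger order by the pointwise sign $\Phi<0$ on $\{|x|\geq\eta\}$ together with the mass concentration of $H$ in that set.

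The chief obstacle is the last subcase $\sigma_\ast=0$, $\mu_k>0$: the straightforward dominated-convergence limit $\mu_k^n\Psi\to 0$ vanishes, and the strict negativity of $\Psi$ must be extracted from the interplay between pointwise negativity of $\Phi$ outside $\overline{B_\eta}$, the continuity of $\Phi=\langle K',\cdot\rangle$ (available since $K\in C^2$), and the concentration of $H$ around $\xi_k$. A precise lower bound on $|\int_{|x|\geq\eta}\Phi H\,dx|$ strictly dominating the $O(\mu_k^n/|\xi_k|^{2n})$ remainder from $B_\eta$ is required here, and uses in an essential way both the $L^1$-integrability of $\Phi$ and its continuity supplied by $(K1)$.
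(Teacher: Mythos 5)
The paper itself omits the proof of this lemma, citing \cite{AmbrosettiGarciaPeral}, so there is no paper argument to compare against directly; I assess your plan on its own merits.

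The opening reduction is correct: $\langle\Gamma'(\mu,\xi),(\mu,\xi)\rangle=\frac{1}{p+1}\int\Phi(\mu y+\xi)z_0^{p+1}(y)\,dy$ with $\Phi=\langle K',\cdot\rangle$, and after the change of variables this is a fixed multiple of $\int\Phi(x)H(x;\mu,\xi)\,dx$ with $H=\mu^n(\mu^2+|x-\xi|^2)^{-n}$. The reduction to a contradiction along a sequence $t_k\to\infty$ with $(\sigma_k,\eta_k)\to(\sigma_\ast,\eta_\ast)$ is sound, and the case $\sigma_\ast>0$ (dominated convergence giving $\mu_k^n\Psi_k\to C\sigma_\ast^{2n}I<0$) and the subcase $\mu_k=0$ (where $\langle\Gamma'(0,\xi_k),(0,\xi_k)\rangle=c_0\Phi(\xi_k)<0$) are both correct.

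The gap is in the remaining subcase $\sigma_\ast=0$, $\mu_k>0$, and it is not a gap you can close with the ingredients you cite. You assert that $\int_{|x|\geq\eta}\Phi H$ is negative of \emph{strictly larger order} than the $B_\eta$-error $O(\mu_k^n/|\xi_k|^{2n})$, attributing this to ``mass concentration of $H$'' near $\xi_k$ together with $\Phi<0$ there. That is false in general: since $\Phi\in L^1$, $\Phi$ can decay arbitrarily fast (e.g.\ $\Phi(x)\sim -e^{-|x|^2}$ for $|x|\geq\eta$), so the bulk of $H$'s mass sitting in $B_{C\mu_k}(\xi_k)$ only sees values of $|\Phi|$ that can be smaller than any power of $1/|\xi_k|$; the resulting contribution is \emph{negligible} compared with $\mu_k^n/|\xi_k|^{2n}$, not dominant. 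The genuinely comparable negative contribution comes from a \emph{fixed} annulus $\eta\leq|x|\leq 2\eta$, where $H\sim\mu_k^n/|\xi_k|^{2n}$ — but that is the \emph{same} order as the $B_\eta$-error, and nothing in pointwise negativity plus continuity plus integrability forces the constants to align in your favor. What you must invoke, and do not, is the integral sign condition $\int_{\mathbb{R}^n}\Phi\,dx<0$ from (K1). The correct mechanism is not concentration of $H$ near $\xi_k$ but near-constancy of $H$ near the origin: pick $R\geq\eta$ with $\int_{B_R}\Phi\,dx<0$ (possible since $\int_{B_R}\Phi\to I<0$ by dominated convergence); since $\mu_k=o(|\xi_k|)$ and $|\xi_k|\to\infty$, on $B_R$ one has $H(x;\mu_k,\xi_k)=h_k\bigl(1+O(R/|\xi_k|)\bigr)$ uniformly, with $h_k:=H(0;\mu_k,\xi_k)>0$, so $\int_{B_R}\Phi H=h_k\bigl(\int_{B_R}\Phi+o(1)\bigr)<0$ for $k$ large; and $\int_{|x|>R}\Phi H<0$ because $\Phi<0$ there. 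Thus $\Psi_k<0$, the desired contradiction. Replacing your ``mass concentration'' step with this near-constancy-plus-$\int\Phi<0$ argument would complete the proof; as written, the plan does not.
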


\begin{lemma}\label{l:lemma1}
Suppose that $K\in L^{\infty}(\mathbb{R}^n)\cap C^2(\mathbb{R}^n)$. Then
\begin{equation*}\label{e:secondderivative1}
D^2_{\mu, \xi_i}\Gamma(0, \xi) = 0,\quad \forall i = 1,\cdot\cdot\cdot,n,
\end{equation*}
\begin{equation*}\label{e:secondderivative2}
D^2_{\mu, \mu}\Gamma(0, \xi) = c_1\Delta K(\xi),
\end{equation*}
where
\begin{equation*}
c_1 = \frac{1}{N(p + 1)}\displaystyle\int_{\mathbb{R}^n}|y|^2z_0^{p + 1}dy.
\end{equation*}
\end{lemma}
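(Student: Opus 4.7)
The starting point is the integral representation
\begin{equation*}
\Gamma(\mu,\xi) \;=\; \frac{1}{p+1}\int_{\mathbb{R}^n} K(\mu y+\xi)\, z_0^{p+1}(y)\,dy,
\end{equation*}
which, together with the symmetric extension used to define $\Gamma$ for $\mu\le 0$, already gave the first-order formula $D_\mu\Gamma(0,\xi)=0$ used in the previous lemma. My plan is to differentiate under the integral sign twice and then exploit the radial symmetry of $z_0$ to kill the unwanted moments.

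First, I would compute, for $\mu\neq 0$,
\begin{equation*}
D_\mu\Gamma(\mu,\xi) = \frac{1}{p+1}\int_{\mathbb{R}^n}\langle K'(\mu y+\xi),y\rangle\, z_0^{p+1}(y)\,dy,
\end{equation*}
and then differentiate once more to obtain
\begin{equation*}
D^2_{\mu,\xi_i}\Gamma(\mu,\xi) = \frac{1}{p+1}\sum_{j=1}^n\int_{\mathbb{R}^n} K_{x_i x_j}(\mu y+\xi)\,y_j\,z_0^{p+1}(y)\,dy,
\end{equation*}
\begin{equation*}
D^2_{\mu,\mu}\Gamma(\mu,\xi) = \frac{1}{p+1}\sum_{j,k=1}^n\int_{\mathbb{R}^n} K_{x_j x_k}(\mu y+\xi)\,y_j y_k\,z_0^{p+1}(y)\,dy.
\end{equation*}
Passing to $\mu\to 0^+$ brings $K_{x_i x_j}(\mu y+\xi)\to K_{x_i x_j}(\xi)$ out of the integral (here the decay $z_0^{p+1}(y)\lesssim (1+|y|)^{-(n+2\gamma)}$ combined with $K\in C^2$ and the $L^\infty$ bound on $K$ — or a smooth cut-off followed by a density argument — provides a dominating function for an application of Lebesgue's theorem).

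Next I would invoke the radial symmetry of $z_0$. The measure $z_0^{p+1}(y)\,dy$ is invariant under the reflections $y_j\mapsto -y_j$, so
\begin{equation*}
\int_{\mathbb{R}^n} y_j\,z_0^{p+1}(y)\,dy = 0,\qquad \int_{\mathbb{R}^n} y_j y_k\,z_0^{p+1}(y)\,dy = 0 \text{ for } j\neq k.
\end{equation*}
This immediately yields $D^2_{\mu,\xi_i}\Gamma(0,\xi)=0$. For the second relation, radial symmetry also gives $\int y_j^2\,z_0^{p+1}\,dy = \frac{1}{n}\int |y|^2\,z_0^{p+1}\,dy$ for every $j$, so
\begin{equation*}
D^2_{\mu,\mu}\Gamma(0,\xi) \;=\; \frac{1}{n(p+1)}\Big(\sum_{j=1}^n K_{x_j x_j}(\xi)\Big)\int_{\mathbb{R}^n}|y|^2\,z_0^{p+1}(y)\,dy \;=\; c_1\Delta K(\xi),
\end{equation*}
which is the stated formula (with $N=n$).

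The only delicate point is justifying the differentiation under the integral sign at $\mu=0$, since $K''$ is only assumed continuous on $\mathbb{R}^n$ and need not be bounded; however, for $\mu$ in a bounded neighbourhood of $0$ and $\xi$ fixed, $K_{x_j x_k}(\mu y+\xi)$ grows at most like a continuous function of $\mu y$, while $|y|^{2}z_0^{p+1}(y)$ decays like $|y|^{2-n-2\gamma}$, so a standard dominated-convergence / mollification argument suffices. Everything else is a direct parity computation, so no further obstacles arise.
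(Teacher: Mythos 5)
Your proposal is correct, and it is exactly the argument the paper has in mind (the paper omits the proof, citing Ambrosetti--Garcia--Peral, where the same differentiation-under-the-integral plus parity computation appears); the identification $N=n$ in the constant $c_1$ is indeed just a typo in the paper. One quantitative point worth tightening: the sharp decay of $z_0^{p+1}$ is $z_0^{p+1}(y)=\alpha^{p+1}(1+|y|^2)^{-n}$, since $(n-2\gamma)(p+1)/2=n$; the bound $(1+|y|)^{-(n+2\gamma)}$ you quote is a valid but weaker upper bound, and if one used only that rate then $|y|^2 z_0^{p+1}\sim |y|^{2-n-2\gamma}$ would be integrable only for $\gamma>1$, so it is the sharper $(1+|y|)^{-2n}$ decay that makes $\int_{\mathbb{R}^n}|y|^2 z_0^{p+1}\,dy$ finite for all $n\geq 3$ and justifies both the existence of $c_1$ and the dominated-convergence step (together with the split $\{|\mu y|\le\delta\}$ versus $\{|\mu y|>\delta\}$, using $K\in L^\infty$ on the far region). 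With that correction the argument is complete; the even-in-$\mu$ extension of $\Gamma$ ensures the one-sided limit you compute really is $D^2_{\mu,\mu}\Gamma(0,\xi)$.
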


\begin{lemma}\label{l:lemma2}
Suppose (K3) hold.
Then
\begin{equation}\label{e:limite}
\displaystyle\lim_{\mu\to 0^+}\frac{\Gamma(\mu,\xi) - \Gamma(0, \xi)}{\mu^\beta} = A_\xi.
\end{equation}
\end{lemma}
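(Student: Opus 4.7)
The plan is to substitute the expansion provided by (K3) directly into the integral representation
\[
\Gamma(\mu,\xi) - \Gamma(0,\xi) = \frac{1}{p+1}\int_{\mathbb{R}^n}\bigl[K(\mu y + \xi) - K(\xi)\bigr] z_0^{p+1}(y)\, dy.
\]
Setting $x = \mu y + \xi$ in the Taylor-type expansion $K(x) = K(\xi) + Q_\xi(x-\xi) + R(x)$, where $R(x) = o(|x-\xi|^\beta)$ as $x \to \xi$, the $\beta$-homogeneity relation $Q_\xi(\mu y) = \mu^\beta Q_\xi(y)$ produces
\[
\frac{\Gamma(\mu,\xi) - \Gamma(0,\xi)}{\mu^\beta} = A_\xi + \frac{1}{\mu^\beta(p+1)}\int_{\mathbb{R}^n} R(\mu y + \xi)\, z_0^{p+1}(y)\, dy.
\]
So the entire task reduces to showing that the remainder integral is $o(\mu^\beta)$ as $\mu \to 0^+$.

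For that remainder I would split $\mathbb{R}^n$ at the scale $|y| = \delta/\mu$. On the inner region $\{\mu|y| \leq \delta\}$, the defining property of $R$ gives $|R(\mu y + \xi)| \leq \sigma\, \mu^\beta |y|^\beta$ once $\delta$ is chosen small enough depending on a prescribed $\sigma > 0$; integrating against $z_0^{p+1}$ yields the uniform bound $\sigma \int_{\mathbb{R}^n} |y|^\beta z_0^{p+1}(y)\, dy$. Because $z_0^{p+1}(y) = O(|y|^{-2n})$ at infinity, the weight $|y|^\beta z_0^{p+1}$ lies in $L^1(\mathbb{R}^n)$ precisely due to the hypothesis $\beta < n$ in (K3), so this piece is $O(\sigma)$, which can be made arbitrarily small. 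On the outer region $\{\mu|y| > \delta\}$, I would use the crude bound $|R(\mu y + \xi)| \leq 2\|K\|_\infty + C\mu^\beta |y|^\beta$ coming from the boundedness of $K$ and the homogeneity of $Q_\xi$; the tail estimate $\int_{|y|>\delta/\mu} z_0^{p+1}\, dy = O(\mu^n)$ together with $\int_{|y|>\delta/\mu} |y|^\beta z_0^{p+1}\, dy \to 0$ by dominated convergence then forces this contribution, after division by $\mu^\beta$, to vanish as $\mu \to 0^+$, once again using $\beta < n$.

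The main technical point is that (K3) provides only a pointwise asymptotic for $R$ near $\xi$, not a uniform estimate on all of $\mathbb{R}^n$; after the rescaling $x = \mu y + \xi$, the argument $\mu y + \xi$ need not stay in the neighborhood where the asymptotic applies. The inner/outer splitting at the $\mu$-dependent scale $\delta/\mu$ is precisely what reconciles the pointwise smallness with the need to integrate globally, and the assumption $\beta \in (1, n)$ plays a double role: it makes $|y|^\beta z_0^{p+1}$ integrable so the inner estimate is finite, and it ensures that the outer tail shrinks faster than $\mu^\beta$.
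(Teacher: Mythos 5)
Your proof is correct, and it follows exactly the standard argument: rescale $x = \mu y + \xi$, use the $\beta$-homogeneity of $Q_\xi$ to extract $\mu^\beta A_\xi$, and control the remainder by splitting at the scale $|y| = \delta/\mu$, with the inner piece handled by the $o(|x-\xi|^\beta)$ bound and the outer piece by $z_0^{p+1}(y) = O(|y|^{-2n})$ together with $\beta < n$. The paper omits this proof, citing that it is analogous to Ambrosetti--Garcia--Peral, and your argument is precisely the one that reference uses.
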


\begin{lemma}\label{l:lemma3}
Suppose (K2) and (K3) hold.
Then $q = (0, \xi)$ is an isolated critical point of $\Gamma$ and the following results hold,
\begin{eqnarray*}
A_\xi > 0\Rightarrow \rm{deg_{loc}}(\Gamma', q) = \rm{deg_{loc}}(K', \xi)\\
A_\xi < 0\Rightarrow \rm{deg_{loc}}(\Gamma', q) = -\rm{deg_{loc}}(K', \xi).
\end{eqnarray*}
\end{lemma}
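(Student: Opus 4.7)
The plan is a two-step argument: first establish that $q = (0, \xi)$ is isolated in $\mathrm{Crit}(\Gamma)$, then compute $\deg_{\mathrm{loc}}(\Gamma', q)$ by homotoping $\Gamma'$ to a product-type model map whose degree splits.

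\textbf{Step 1 (Isolatedness).} By (K2), $\xi$ is isolated in $\mathrm{Crit}(K)$, so there is $\delta > 0$ with $K'(x) \neq 0$ on $\overline{B_\delta(\xi)} \setminus \{\xi\}$. Combined with Remark \ref{r:equivalence}, this says that on the slice $\{0\} \times \overline{B_\delta(\xi)}$ the only critical point of $\Gamma$ is $(0,\xi)$. For points with $\mu \neq 0$, one notes that the proof of Lemma \ref{l:lemma2}, using the continuous dependence of $Q_y$ on $y$ in (K3), in fact yields the locally uniform expansion
\[
\Gamma(\mu,x) = c_0 K(x) + \mu^\beta A_x + o(\mu^\beta), \qquad A_x := \tfrac{1}{p+1}\int_{\mathbb{R}^n} Q_x(w)\,z_0^{p+1}(w)\,dw,
\]
with $A_x$ continuous and $A_\xi \neq 0$. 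Differentiating (working directly from $D_\mu \Gamma(\mu,x) = \tfrac{1}{p+1}\int \langle K'(\mu w + x), w\rangle z_0^{p+1}(w)\,dw$ via the same expansion of $K$) gives $D_\mu \Gamma(\mu,x) = \beta \mu^{\beta-1} A_x + o(\mu^{\beta-1})$. For $\epsilon_1,\epsilon_2$ small, $A_x$ keeps the sign of $A_\xi$ throughout $\overline{B_{\epsilon_2}(\xi)}$, so $D_\mu \Gamma(\mu,x) \neq 0$ on $(0,\epsilon_1]\times \overline{B_{\epsilon_2}(\xi)}$; the even extension handles $\mu < 0$. Hence $q$ is the only critical point of $\Gamma$ in $\Omega := (-\epsilon_1,\epsilon_1)\times B_{\epsilon_2}(\xi)$.

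\textbf{Step 2 (Homotopy).} Introduce the model map
\[
\Psi(\mu,x) := \bigl(A_\xi \mu,\; c_0 K'(x)\bigr)
\]
and the affine homotopy $H_s := (1-s)\Gamma' + s\Psi$ for $s \in [0,1]$. I verify $H_s \neq 0$ on $\partial \Omega$ face by face. On $\{\pm \epsilon_1\}\times \overline{B_{\epsilon_2}(\xi)}$, the first components of $\Gamma'$ and $\Psi$ both have sign $\mathrm{sgn}(\mu)\,\mathrm{sgn}(A_\xi)$ (by the asymptotic of $D_\mu\Gamma$ above for $\Gamma'$ and directly for $\Psi$), so any convex combination retains that sign. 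On $(-\epsilon_1,\epsilon_1)\times \partial B_{\epsilon_2}(\xi)$, $|K'(x)| \geq c > 0$ uniformly; dominated convergence applied to $D_x\Gamma(\mu,x) = \tfrac{1}{p+1}\int K'(\mu w + x) z_0^{p+1}(w)\,dw$ gives $D_x\Gamma(\mu,x) \to c_0 K'(x)$ uniformly on $\partial B_{\epsilon_2}(\xi)$ as $\mu \to 0$, so both endpoints of $H_s$'s second component are close to $c_0 K'(x)$ and hence nonzero for $\epsilon_1$ small.

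\textbf{Step 3 (Product degree).} By homotopy invariance, $\deg_{\mathrm{loc}}(\Gamma', q) = \deg(\Psi, \Omega, 0)$. Since $\Psi = (\psi_1(\mu), \psi_2(x))$ has product form with isolated zeros at $\mu = 0$ and $x = \xi$ respectively, the product formula for topological degree gives
\[
\deg(\Psi, \Omega, 0) = \deg\bigl(\psi_1, (-\epsilon_1,\epsilon_1), 0\bigr)\cdot \deg\bigl(\psi_2, B_{\epsilon_2}(\xi), 0\bigr) = \mathrm{sgn}(A_\xi)\cdot \deg_{\mathrm{loc}}(K',\xi),
\]
using that $\mu \mapsto A_\xi \mu$ is a linear isomorphism of $\mathbb{R}$ of sign $\mathrm{sgn}(A_\xi)$ and that the factor $c_0 > 0$ does not affect the degree of $K'$. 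The two cases of the lemma follow immediately.

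The main obstacle is the uniformity in $x$ of the expansions used in Step 1: Lemma \ref{l:lemma2} is stated only at the fixed point $y = \xi$, whereas we need the asymptotics of both $\Gamma(\mu,x)$ and $D_\mu \Gamma(\mu,x)$ to be uniform for $x$ in a neighborhood of $\xi$. This comes from the continuous dependence of $Q_y$ on $y$ in (K3), together with the uniform control on the remainder $K(z) - K(x) - Q_x(z-x) = o(|z-x|^\beta)$ provided by the compactness of $\overline{B_{\epsilon_2}(\xi)}$ and the regularity of $K$; once this is in place, the sign-check on $\partial \Omega$ and the degree factorization are routine.
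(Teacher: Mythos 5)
Your overall strategy---homotope $\Gamma'$ on a small box $(-\epsilon_1,\epsilon_1)\times B_{\epsilon_2}(\xi)$ to the product-form model map $\Psi(\mu,x)=(A_\xi\mu,\,c_0K'(x))$ and then apply the product formula for the degree---is the natural one, and it does deliver $\deg_{\mathrm{loc}}(\Gamma',q)=\mathrm{sgn}(A_\xi)\cdot\deg_{\mathrm{loc}}(K',\xi)$, which is exactly the statement. The paper itself omits the proof and defers to \cite{AmbrosettiGarciaPeral}, and your argument is in the same spirit as the degree computation one finds there, so I will not fault the architecture.

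The one step that, as written, does not go through is your derivation of $D_\mu\Gamma(\mu,x)=\beta\mu^{\beta-1}A_x+o(\mu^{\beta-1})$, which you need both for isolatedness (Step 1) and for the sign check on the faces $\{\pm\epsilon_1\}\times\overline{B_{\epsilon_2}(\xi)}$ (Step 2). You say this follows by ``working directly from $D_\mu\Gamma(\mu,x)=\frac{1}{p+1}\int\langle K'(\mu w+x),w\rangle z_0^{p+1}\,dw$ via the same expansion of $K$,'' but the integrand involves $K'$, and (K3) is an expansion of $K$, not of $K'$; one cannot differentiate an $o(|x-y|^\beta)$ remainder, and in general $K(y+h)-K(y)=Q_y(h)+o(|h|^\beta)$ does \emph{not} imply $K'(y+h)-K'(y)=o(|h|^{\beta-1})$. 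The correct way to extract the leading term from the hypothesis you actually have is to convert the $\mu$-derivative back into a statement about $K$: since $\mu\langle K'(\mu w+x),w\rangle=\mathrm{div}_w\bigl[K(\mu w+x)\,w\bigr]-nK(\mu w+x)$, integration by parts gives
\begin{equation*}
D_\mu\Gamma(\mu,x)=-\frac{1}{\mu(p+1)}\int_{\mathbb{R}^n}K(\mu w+x)\,\mathrm{div}_w\bigl(w\,z_0^{p+1}(w)\bigr)\,dw,
\end{equation*}
and now one may insert $K(\mu w+x)=K(x)+\mu^\beta Q_x(w)+o(\mu^\beta|w|^\beta)$ (the constant term integrates to zero) and use Euler's identity $\langle w,\nabla Q_x(w)\rangle=\beta Q_x(w)$ to recover exactly $\beta\mu^{\beta-1}A_x$. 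Carrying the remainder through still requires the uniformity-in-$x$ of the $o(\cdot)$ in (K3), which you rightly flag; that is an implicit strengthening of (K3) (consistent with the stipulated continuous dependence of $Q_y$ on $y$) rather than something that follows from compactness alone. With this repair, the homotopy and the product-degree computation in Steps 2--3 are sound.
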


The proof of Lemma \ref{e:gamma}, \ref{l:lemma1}, \ref{l:lemma2} and \ref{l:lemma3} is similar to \cite{AmbrosettiGarciaPeral},
so we omit it.

Let $R\geq \eta$, where $\eta$ is the constant in condition (K1). Set $B_R^n = \{x\in \mathbb{R}^n: |x| < R\}$. From (K1), we have
$$\langle K'(x), x\rangle < 0 {\rm \,\,for\,\, all\,\,} |x| = R.$$
Thus we obtain
\begin{equation*}
{\rm deg}(K', B_R^n, 0) = (-1)^n.
\end{equation*}
By the properties of the topological degree, the above formula is equivalent to
\begin{equation*}
\displaystyle\sum_{\xi\in {\rm Crit}(K)} {\rm deg_{loc}}(K', \xi) = (-1)^n.
\end{equation*}
Since $A_\xi \neq 0$ for all $\xi\in {\rm Crit}(K)$, we also have
\begin{equation}\label{e:degreeformula}
\displaystyle\sum_{A_\xi > 0}{\rm deg_{loc}}(K', \xi) + \displaystyle\sum_{A_\xi < 0}{\rm deg_{loc}}(K', \xi)= (-1)^n.
\end{equation}
Let $\vartheta^+ = \{(\mu, \xi)\in {\rm Crit}(\Gamma):\mu > 0\}$.
Then we have
\begin{lemma}
$\vartheta^+$ is a (possibly empty) compact set.
\end{lemma}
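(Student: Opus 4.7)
The plan is to verify that $\vartheta^+$ is bounded and closed as a subset of $\mathbb{R}^{n+1}$; once both are established, compactness follows from the Heine–Borel theorem.

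For boundedness, the key tool is Lemma \ref{e:gamma}. Any $(\mu,\xi) \in \vartheta^+$ satisfies $\Gamma'(\mu,\xi) = 0$, so the pairing $\langle \Gamma'(\mu,\xi), (\mu,\xi)\rangle$ vanishes identically at such a point. However, Lemma \ref{e:gamma} provides an $R > 0$ for which this pairing is strictly negative whenever $|\mu|+|\xi| \geq R$. Hence every point of $\vartheta^+$ must lie in the bounded region $\{|\mu|+|\xi| < R\}$.

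For closedness, I would take a sequence $(\mu_k,\xi_k) \in \vartheta^+$ converging to some $(\mu_\infty,\xi_\infty) \in \mathbb{R}^{n+1}$. Since $\Gamma \in C^1$, the limit remains a critical point of $\Gamma$. If $\mu_\infty > 0$, the limit lies in $\vartheta^+$ and we are done. The subtle case is $\mu_\infty = 0$: a priori, a sequence of interior critical points could slide onto the boundary hyperplane $\{\mu = 0\}$. Here Remark \ref{r:equivalence} first forces $\xi_\infty \in \mathrm{Crit}(K)$, and then Lemma \ref{l:lemma3} asserts that $(0,\xi_\infty)$ is an \emph{isolated} critical point of $\Gamma$. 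This contradicts the existence of the approximating sequence $(\mu_k,\xi_k)$, which is automatically distinct from $(0,\xi_\infty)$ because $\mu_k > 0$. Ruling out this boundary accumulation is the only genuinely nontrivial point, and it is precisely where the asymptotic expansion in (K3), encoded analytically through Lemmas \ref{l:lemma2} and \ref{l:lemma3}, does the work; the remainder is a straightforward pairing-against-the-radial-vector-field argument together with continuity of $\Gamma'$.
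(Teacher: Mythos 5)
Your proof is correct and follows essentially the same approach as the paper: boundedness comes from the radial estimate of Lemma \ref{e:gamma}, and closedness (ruling out accumulation on $\{\mu=0\}$) comes from the fact that each $(0,\xi)$ with $\xi\in\mathrm{Crit}(K)$ is an isolated critical point of $\Gamma$, which the paper extracts directly from the asymptotic (\ref{e:limite}) of Lemma \ref{l:lemma2} while you cite the packaged version in Lemma \ref{l:lemma3}. Your write-up is merely more explicit about the sequential argument.
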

\begin{proof}
From Lemma \ref{e:gamma}, we have that $\vartheta^+$ is bounded. Since (\ref{e:limite}) holds, $\Gamma$ does not have critical points in a sufficiently small neighborhood of $(0,\xi)$. Therefore, $\vartheta^+$ is compact.
\end{proof}
By the evenness of $\Gamma$ is in $\mu$, $\vartheta^- = \{(-\mu, \xi): (\mu, \xi)\in \vartheta^+\}$ also consists of critical points of $\Gamma$.

\begin{lemma}
There is a bounded open set $\Omega \subset (0, \infty)\times \mathbb{R}^n$ with $\vartheta^+ \subset \Omega$ such that
\begin{equation*}
{\rm deg}(\Gamma', \Omega, 0)\neq 0.
\end{equation*}
\end{lemma}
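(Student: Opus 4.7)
The plan is to extend $\Gamma$, already even in $\mu$, to a $C^1$ function on $\mathbb{R}^{n+1}$, compute the Brouwer degree of $\Gamma'$ on a large ball via the standard homotopy to $-\mathrm{id}$, and then split this global degree by excision into contributions coming from $\vartheta^+$, from its mirror image $\vartheta^-$, and from the critical points $(0,\xi)$ with $\xi \in \mathrm{Crit}(K)$.

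Fix $R > \eta$ large enough that Lemma \ref{e:gamma} yields $\langle \Gamma'(u), u \rangle < 0$ for every $u \in \mathbb{R}^{n+1}$ with $|u| = R$; condition (K1) ensures we may also assume $\mathrm{Crit}(K) \subset B_R^n$. The homotopy $H_t(u) = (1-t)\Gamma'(u) - tu$ satisfies $\langle H_t(u), u \rangle < 0$ on $\partial B_R^{n+1}$, so
\begin{equation*}
\deg(\Gamma', B_R^{n+1}, 0) = \deg(-\mathrm{id}, B_R^{n+1}, 0) = (-1)^{n+1}.
\end{equation*}
Since $\vartheta^+$ is compact and lies in $\{\mu > 0\}$, there is $\mu_0 > 0$ with $\vartheta^+ \subset \{\mu \geq \mu_0\}$. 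For a small $\delta \in (0, \mu_0)$ set $\Omega := B_R^{n+1} \cap \{\mu > \delta\}$ and $\Omega^- := B_R^{n+1} \cap \{\mu < -\delta\}$; then $\Omega$ is bounded, open, contained in $(0,\infty) \times \mathbb{R}^n$, and contains $\vartheta^+$. By Lemma \ref{l:lemma3}, each $(0,\xi)$ with $\xi \in \mathrm{Crit}(K)$ is an isolated critical point of $\Gamma$, so I can place a small disjoint open ball $U_\xi$ around it containing no other critical point of $\Gamma$. A standard extraction argument (a hypothetical sequence of critical points in $B_R^{n+1}$ with $|\mu|\to 0$ would sub-converge to some $(0, \bar\xi)$ with $\bar\xi \in \mathrm{Crit}(K)$, contradicting that isolation) shows that for $\delta$ small enough, $\Gamma'$ does not vanish on the slices $\{|\mu| = \delta\} \cap \overline{B_R^{n+1}}$ nor on $B_R^{n+1} \setminus (\Omega \cup \Omega^- \cup \bigcup_\xi U_\xi)$.

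Additivity of the Brouwer degree then produces
\begin{equation*}
(-1)^{n+1} = \deg(\Gamma', \Omega, 0) + \deg(\Gamma', \Omega^-, 0) + \sum_{\xi \in \mathrm{Crit}(K)} {\rm deg_{loc}}(\Gamma', (0,\xi)).
\end{equation*}
The reflection $\sigma(\mu, \xi) = (-\mu, \xi)$ satisfies $\Gamma \circ \sigma = \Gamma$, which gives $\Gamma'(\sigma u) = \sigma \Gamma'(u)$ and $D\Gamma'(\sigma u) = \sigma D\Gamma'(u) \sigma^{-1}$. Taking determinants yields $\det D\Gamma'(\sigma u) = \det D\Gamma'(u)$, so the signs of $\det D\Gamma'$ agree at corresponding zeros and $\deg(\Gamma', \Omega^-, 0) = \deg(\Gamma', \Omega, 0)$. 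Splitting the local degrees at $(0,\xi)$ according to the sign of $A_\xi$ via Lemma \ref{l:lemma3} and invoking (\ref{e:degreeformula}) to eliminate the $A_\xi > 0$ sum, the above identity rearranges to
\begin{equation*}
\deg(\Gamma', \Omega, 0) = \sum_{A_\xi < 0} {\rm deg_{loc}}(K', \xi) - (-1)^n,
\end{equation*}
which is nonzero by hypothesis (K4).

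The main obstacles in carrying out this plan are the technical verification that $\Gamma'$ is nowhere zero on the slices $\{\mu = \pm\delta\} \cap \overline{B_R^{n+1}}$ once $\delta$ is small (needed to make the excision step legitimate), and the careful sign bookkeeping in the reflection argument, where the factor $(\det \sigma)^2 = 1$ is precisely what forces the two half-ball degrees to be equal rather than opposite. Once these two points are handled, the conclusion is a direct linear manipulation of the resulting degree identity using (K4).
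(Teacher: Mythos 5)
Your argument is correct and follows essentially the same path as the paper: compute $\deg(\Gamma', B_R^{n+1}, 0) = (-1)^{n+1}$, use excision together with the evenness of $\Gamma$ in $\mu$ to relate $\deg(\Gamma', \Omega, 0)$ to the local degrees at the points $(0,\xi)$, and finish with Lemma \ref{l:lemma3} and the global degree identity (\ref{e:degreeformula}). The only cosmetic difference is that you run the degree bookkeeping directly to obtain the explicit value $\deg(\Gamma', \Omega, 0) = \sum_{A_\xi < 0}\deg_{\rm loc}(K',\xi) - (-1)^n$, whereas the paper phrases the same additivity computation as a contradiction argument assuming the degree vanishes; also, the compactness of $\vartheta^+$ already guarantees it lies in $\{\mu \geq \mu_0\}$ for some $\mu_0>0$, so the extraction argument you sketch for choosing $\delta$ is correct but slightly more than is needed.
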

\begin{proof}
By Lemma \ref{e:gamma} it holds that
\begin{equation*}
{\rm deg}(\Gamma', B_R^{n + 1}, 0) = (-1)^{n + 1}.
\end{equation*}
Suppose the conclusion is not true, let $\Omega$ be an open bounded set such that
$$\vartheta^+ \subset\Omega\subset (0, \infty)\times \mathbb{R}^n$$
and
$${\rm deg}(\Gamma', \Omega, 0) = 0.$$
Let $\Omega^- = \{(-\mu, \xi):(\mu, \xi)\in \Omega\}$ and $\Omega' = \Omega \cup \Omega^-$. Then
$${\rm deg}(\Gamma', \Omega^-, 0) = 0.$$
Hence
\begin{equation*}
{\rm deg}(\Gamma', B_R^{n + 1}\setminus \overline{\Omega'}, 0) = (-1)^{n + 1}.
\end{equation*}
By remark \ref{r:equivalence}, any $q\in {\rm Crit}(\Gamma)\setminus (\vartheta^+ \cup \vartheta^-)$ has the form $q = (0, \xi)$ for some $\xi\in {\rm Crit}(K)$. Then by Lemma \ref{l:lemma3},
\begin{eqnarray*}
(-1)^{n + 1} = {\rm deg}(\Gamma', B_R^{n + 1}\setminus \bar{\Omega'}, 0) &=& \sum {\rm deg_{loc}}(\Gamma', q)\\
&=& \displaystyle\sum_{A_\xi > 0}{\rm deg_{loc}}(K', \xi) - \displaystyle\sum_{A_\xi < 0}{\rm deg_{loc}}(K', \xi).
\end{eqnarray*}
From (\ref{e:degreeformula}), one obtains
\begin{equation*}
\displaystyle\sum_{A_\xi < 0}{\rm deg_{loc}}(K', \xi)  = (-1)^n,
\end{equation*}
a contradiction with (K4).
\end{proof}

Thus (\ref{e:degreformula3}) is satisfied.

{\bf Proof of Theorem \ref{t:main}}: {\bf Existence}: Since all the conditions of Theorem \ref{t:abstracttheorem} are satisfied, we find a critical point $u_\varepsilon\in D^{\gamma}(\mathbb{R}^n)$ of $f_\varepsilon$. It is a weak solution of problem (\ref{e:main}) in the sense that
\begin{equation*}
\displaystyle\int_{\mathbb{R}^n}(-\Delta)^{\frac{\gamma}{2}} u(x)(-\Delta)^{\frac{\gamma}{2}} \phi(x)dx =  \displaystyle\int_{\mathbb{R}^n}(1 + \varepsilon K(x)){u_+(x)}^{\frac{n + 2\gamma}{n - 2\gamma}}\phi(x)dx,
\end{equation*}
for any $\phi\in D^{\gamma}(\mathbb{R}^n)$.

{\bf Positivity}: For any $\phi\in C_0^{\infty}(\mathbb{R}^n)$, let
\begin{equation*}
\psi(x) = \displaystyle\int_{\mathbb{R}^n}\frac{\phi(y)}{|x - y|^{n - 2\gamma}}.
\end{equation*}
Then $(-\Delta)^{\gamma} \psi = \phi$ and so $\psi\in D^{\gamma}(\mathbb{R}^n)$. Therefore, we have
\begin{equation*}
\displaystyle\int_{\mathbb{R}^n}(-\Delta)^{\frac{\gamma}{2}} u(x)(-\Delta)^{\frac{\gamma}{2}} \psi(x)dx =  \displaystyle\int_{\mathbb{R}^n}(1 + \varepsilon K(x)){u_+(x)}^{\frac{n + 2\gamma}{n - 2\gamma}}\psi(x)dx.
\end{equation*}
Integration by parts of the left-hands side and exchanging the order of integration of the right-hand side yields
\begin{equation*}
\displaystyle\int_{\mathbb{R}^n}u(x)\phi(x)dx =  \displaystyle\int_{\mathbb{R}^n}\left\{\int_{\mathbb{R}^n} \frac{(1 + \varepsilon K(y))u_+(y)^{\frac{n + 2\gamma}{n - 2\gamma}}}{|x - y|^{n - 2\gamma}}dy\right\}\phi(x)dx.
\end{equation*}
Since $\phi$ is any function in $C_0^{\infty}(\mathbb{R}^n)$, we conclude that
\begin{equation*}
u(x) =  \int_{\mathbb{R}^n} \frac{(1 + \varepsilon K(y))u_+(y)^{\frac{n + 2\gamma}{n - 2\gamma}}}{|x - y|^{n - 2\gamma}}dy.
\end{equation*}
Thus $u \geq 0$ and it satisfies
\begin{equation}\label{e:integral}
u(x) =  \int_{\mathbb{R}^n} \frac{(1 + \varepsilon K(y))u(y)^{\frac{n + 2\gamma}{n - 2\gamma}}}{|x - y|^{n - 2\gamma}}dy.
\end{equation}
Since $u\neq 0$, from this formula, we have $u(x) > 0$.

{\bf Regularity}: Minor modification of the proof of Theorem 1.2 in \cite{LiYanyan2004} and from the integral formula (\ref{e:integral}) of $u$, we have $u \in C^{\infty}(\mathbb{R}^n)$.

Since $u\in D^{\gamma}(\mathbb{R}^n)$, $\lim_{|x|\to 0}u(x) = 0$. Thus this solution can be lifted to the sphere $\mathbb{S}^n$.

This completes the proof.

{\bf Proof of Corollary \ref{c:corollary}}:
In the case of (K5), let $\beta = 2$ and $Q_\xi = D^2_{ij}K(\xi)(x - \xi)^2$ in (K3). Then we have $A_\xi = c_1\Delta K(\xi) \neq 0$. So Theorem \ref{t:main} can be applied. In case of (K6), set
\begin{equation*}
A_\xi = \sum a_j(\xi)\cdot \displaystyle\int_{\mathbb{R}^n}|y_1|^\beta z_0^{p + 1}dy\neq 0
\end{equation*}
and Theorem \ref{t:main} can be applied.

\bibliography{mrabbrev,mlabbr2003-0,localbib}
\bibliographystyle{plain}
\end{document}